\providecommand{\email}[1]{\href{mailto:#1}{\nolinkurl{#1}}}
\setlist[enumerate,1]{label={(\roman*)}}
\setlist[enumerate,2]{label={(\alph*)}}
\setlist[enumerate,3]{label={(\Roman*)}}
\newcommand{\newsstheorem}[2]{
  \newaliascnt{#1}{dummy}
  \newtheorem{#1}[#1]{#2}
  \aliascntresetthe{#1}
  \expandafter\def\csname #1autorefname\endcsname{#2}
}
\numberwithin{dummy}{section}
\theoremstyle{plain}
  \theoremstyle{definition}
\theoremstyle{remark}
\newenvironment{eqnarr*}{\begin{IEEEeqnarray*}{rCl}}{\end{IEEEeqnarray*}\ignorespacesafterend}
\newcommand\RR{\mathbb{R}^d}
\newcommand\PP{\mathbb{P}}
\newcommand\QQ{\mathbb{Q}}
\newcommand\EE{\mathbb{E}}
\newcommand\NN{\mathbb{N}}
\newcommand\ZZ{\mathbb{Z}}
\newcommand\e{{\rm e}}
\newcommand\Ind{\mathbbm{1}}
\newcommand\mathof[1]{{\operator@font#1}} \makeatother
\newcommand\dd{\mathof{d}}
\newcommand\iu{\mathof{i}}
\DeclarePairedDelimiterX\ip[2]{\langle}{\rangle}{#1,#2}
\begin{document}

\title{Noise reinforcement \\for L\'evy processes}
\author{Jean Bertoin\footnote{Institute of Mathematics, University of Zurich, Switzerland}  }
\date{\unskip}
\maketitle 
\thispagestyle{empty}

\begin{abstract} {\bf Abstract:} 
In a step reinforced random walk, at each integer time and with a fixed probability $p\in(0,1)$, the walker  repeats one of his previous steps chosen uniformly at random, and with complementary probability $1-p$, the walker makes an independent new step with a given distribution. 
Examples in the literature include the so-called elephant random walk and the shark random swim. We consider here a continuous time analog, when the random walk is replaced by a L\'evy process. For sub-critical (or admissible) memory parameters $p<p_c$, where $p_c$ is related to the Blumenthal-Getoor index of the  L\'evy process, we construct a noise reinforced L\'evy process. Our main result shows that the step-reinforced random walks corresponding to discrete time skeletons of the L\'evy process, converge weakly to  the noise reinforced L\'evy process as the time-mesh goes to $0$. 
\end{abstract} 

\begin{abstract} {\bf R\'esum\'e:} 
Dans une marche al\'eatoire \`a pas renforc\'es, \`a chaque instant entier et avec une probabilit\'e fix\'ee $p\in(0,1)$, le marcheur r\'ep\`ete un de ses pr\'ec\'edents pas tir\'e uniform\'ement au hasard, et avec probabilit\'e $1-p$ effectue un nouveau pas ind\'ependant de loi donn\'ee.  Comme exemples dans la litt\'erature figurent l'elephant random walk et le shark random swim. Nous nous int\'eressons ici \`a un analogue en temps continu, c'est-\`a-dire lorsque la marche al\'eatoire est remplac\'ee par un processus de L\'evy. Pour des param\`etres de m\'emoire sous-critiques (ou encore admissibles) $p<p_c$, o\`u $p_c$ est li\'e \`a l'indice de Blumenthal-Getoor du processus de L\'evy, nous construisons un processus de L\'evy \`a bruit renforc\'e. Notre r\'esultat principal \'etablit la convergence en loi des marches al\'eatoires \`a pas renforc\'es associ\'ees aux squelettes discrets d'un processus de L\'evy, vers le processus de L\'evy \`a bruit renforc\'e, lorsque que le pas de la subdivision du temps tend vers $0$.
\end{abstract} 
{\small 
\textit{Keywords:}
Reinforcement, preferential attachment, L\'evy process, Yule-Simon distribution, Blumenthal-Getoor index.\newline
\textit{2010 Mathematics Subject Classification:}  60G50; 60G51;  60K35.
}

\section{Introduction}\label{s:intro}
The terminology random walk is commonly  used in the literature for two different families of stochastic processes in discrete time. First, a random walk can refer to a Markov chain on a graph, such that one-step transitions are allowed only between neighboring vertices.  Second, a random walk can also refer to the sequence  of the partial sums of i.i.d. random variables, say in $\RR$. Although  the main part of this work is concerned with the second meaning only,
we shall need in this introduction both interpretations, and it should be clear from the context which one is actually in use.

Informally, the {\em edge reinforced random walk}, which has  been introduced in 1986 by Coppersmith and Diaconis in a frequently cited unpublished article, is non-Markovian stochastic process on a locally finite graph, which
tends to travel ofter through edges it has already often travelled through in the past. 
More precisely, the conditional probability that
the $n$-th step of the process is from a vertex $x$ to a neighboring vertex $y$ given the trajectory up to time $n$,
is proportional to $a_e$ plus the number of times the edge $e=\{x,y\}$ has been previously crossed, where $a_e>0$ represents the initial weight of the edge $e$.  
This model and its variations (reinforcement can be non-linear, or rather concern vertices instead of edges, ...)  have triggered numerous works over the last decades.  Pemantle \cite{Pem} wrote a most useful survey some 12 years ago; we also refer to \cite{AngCrowKoz, CotThac, DisSabTar, MaUr, SabTar} and works cited therein for some more recent developments. 

The motivation for the present paper stems from a related notion of reinforcement, now for the second meaning of random walk. Informally, a  {\em step reinforced random walk}  tends to repeat steps that it has already often made in the past. 
Specifically, consider  a random walk $S(n)=X_1+\cdots + X_n$ where the steps  $X_1, X_2, \ldots$ form a sequence of i.i.d. variables in $\RR$. We update the increments $X_i$ one after another as follows.
Fix some $p\in(0,1)$, called the memory parameter, and let $(\varepsilon_i: i\geq 2)$ be an independent sequence of Bernoulli variables with parameter $p$.
We set first $\hat X_1=X_1$, and next for $i\geq 2$, we let $\hat X_i=X_i$ if $\varepsilon_i=0$, whereas    if $\varepsilon_i=1$, then we define $\hat X_i$ as a uniform random sample from $\hat X_1, \ldots, \hat X_{i-1}$.  
Finally, the sequence of the partial sums of the updated  sequence,  
$$\hat S(n)=\hat X_1+ \cdots + \hat X_n, \qquad n\in\NN,$$ is called a step reinforced random walk. In words, at each time, with probability given by the memory parameter $p$, $\hat S$ repeats one of its preceding steps chosen uniformly at random, and otherwise $\hat S$ has an independent increment with a fixed distribution. In the memoryless case $p=0$, one has $\hat S(n)=S(n)$, whereas 
$\hat S(n)=nX_1$ in the perfect memory case $p=1$. So, say when the steps are centered with finite variance, one naturally expects that the asymptotic behavior of the step reinforced random walk with memory $p\in(0,1)$ should somehow  interpolate between the diffusive behavior and the ballistic behavior with a random velocity. 

Step reinforcement
has been considered for the one-dimensional simple symmetric random walk, i.e. $d=1$ and  $\PP(X_i=1)=\PP(X_i=-1)=1/2$,   by K\"ursten \cite{Kur} as a version of the so-called {\em elephant random walk} \footnote{ Beware that most works on elephant random walks use a different -but actually  equivalent- construction, where the memory parameter corresponds to $(p+1)/2$ in the present notation.}. The latter is 
a model of a random walk with memory  which had been introduced previously by Sch\"utz and Trimper \cite{SchTr}
 and then studied by several authors, see e.g. \cite{BaurBer, Bercu, ColGavSch1, ColGavSch2}.
Quite recently, Businger \cite{Buart} investigated the scaling limits of the so-called {\em shark random swim}, a step reinforced random walk whose increments follow an isotropic stable distribution in $\RR$. 
A striking feature that has been pointed at in those works, is that the long-time behavior 
of $\hat S$ exhibits a phase transition at some critical memory parameter. Specifically, in the simple symmetric case, $n^{-1/2}\hat S(n)$ converges in distribution to some Gaussian law when $p<1/2$, whereas for $p>1/2$, $n^{-p}\hat S(n)$ converges almost surely to some 
(non Gaussian) random variable. If the steps follow an isotropic stable law with index $\alpha\in(0,2]$, then  $n^{-1/\alpha}\hat S(n)$ converges in distribution to some stable law when $\alpha p<1$, whereas $n^{-p}\hat S(n)$ converges almost surely to some (non zero) random variable when $\alpha p>1$.

The purpose of the present work is to  investigate  
the analog of step reinforcement in continuous time, that 
when the random walk $S=(S(n))_{n\in\NN}$ is replaced by a L\'evy process $\xi=(\xi(t))_{t\geq 0}$, for  memory parameters smaller than a certain critical value. Informally,  the steps 
$X_i$ of $S$ need to be substituted by the  increments of $\xi$ on infinitesimal time durations, that is by a L\'evy noise $\dd \xi(t)$.
We shall argue that the  existence of a stochastic process $\hat \xi$ whose time-derivative $\dd \hat\xi(t)$ in the sense of generalized function can be viewed as a reinforcement of the L\'evy noise $\dd \xi(t)$, depends crucially on the so-called Blumenthal-Getoor index $\beta$ of $\xi$. The latter was introduced in \cite{BG61}, and it is well-known that several local path properties of $\xi$ are then similar to those of a $\beta$-stable L\'evy process. We refer to the introduction in \cite{MansSchnurr} for some background and historical perspective on this notion. We shall see that 
$p_c=1/\beta$ is the critical memory parameter for noise reinforcement of L\'evy processes, specifically $\hat \xi$ can be defined when the memory parameter $p$ is {\em admissible}, that smaller than $1/\beta$, but this is no longer possible when $p\beta>1$. 
The most interesting situation is when $\beta>1$, as otherwise the inequality  $p\beta<1$ always holds. 
The L\'evy process $\xi$ has then a.s. bounded $\gamma$-variation for $\gamma>\beta$, and a.s. infinite $\gamma$-variation for $\gamma<\beta$. Informally, reinforcing the L\'evy noise $\dd \xi(t)$ with a super-critical memory parameter  $p>1/\beta$ disrupts the compensation mechanism at work for L\'evy processes with unbounded variation to the point that the reinforced noise cannot be integrated. 

An important general question about  reinforced processes is  to describe in which ways reinforcement alters the asymptotic behavior (recurrence, transience, trapping, asymptotic velocity, etc.; see e.g. \cite{ AngCrowKoz, CotThac, DisSabTar, Pem, SabTar} ).  Because of the repetition of certain jumps, a noise reinforced L\'evy process is usually not Markovian, and its distribution is singular with respect to that of the original L\'evy process. It is thus natural to ask similarly how noise reinforcement affects the local properties of the L\'evy process (oscillating behavior, rate of growth, etc.); however this will not be tackled here. 

The  rest of this work is organized in two main parts: in the first, we construct noise reinforced L\'evy processes,
and in the second, we consider random walks arising as discrete time skeletons of a L\'evy process and 
establish the weak convergence of the step reinforced random walks to the noise reinforced L\'evy process as the time-mesh goes to $0$.
More precisely, the plan is as follows.

Section 2.1  introduces Yule-Simon processes, a family of counting processes with one-dimensional marginal laws closely related to so called Yule-Simon distributions, and which form the building blocks for noise reinforcement. The material is elementary and doubtless already well-known, although I have often been unable to find precise references for some of the basic results which are discussed there. In Section 2.2, we investigate convergence of random sums of independent Yule-Simon processes, possibly after an adequate  compensation, and point at the key role of the Blumenthal-Getoor index in this framework.
This is used in Section 2.3 to construct noise reinforced L\'evy processes, essentially by mimicking the celebrated  L\'evy-It\^{o} decomposition; an important feature is that one can compute the multidimensional characteristic functions of such processes in terms of certain functionals of Yule-Simon processes. Our main result is claimed and established in Section 3: when the memory parameter $p$ is admissible, then the step reinforced random walk associated to a discrete time skeleton of a L\'evy process $\xi$ converges in distribution to the corresponding noise reinforced L\'evy process $\hat \xi$  as the time-mesh tends to $0$. The proof relies on the convergence of the empirical measure of the counting processes for occurrences in Simon's reinforcement model  \cite{Simon} towards the law of a Yule-Simon process, which requires techniques of propagation of chaos, and a uniform-integrability property of those counting processes. The fact that a phase transition occurs at the critical memory parameter $p_c=1/\beta$ is essentially a consequence of the property that Yule-Simon distributions are heavy-tailed.

\section{Construction of noise-reinforced L\'evy processes}
\subsection{Yule-Simon processes}\label{s:YSP}
Simon \cite{Simon} introduced a simple random dynamic for explaining the appearance of a family of heavy-tailed distributions in a variety of empirical data in biology, sociology, economics, ... It can be viewed as an early prototype of preferential attachment models later popularized by Barab\'asi and Albert, to which the step reinforced random walk is closely related. Typically, imagine one writes  a book as follows, producing a large random sequence of words. Fix a parameter $p\in(0,1)$, and write the first word of the book. Next, repeat this same word with probability $p$, or write a new word with probability $1-p$. When the length of the text is $k\geq 2$, the event that the $(k+1)$-th word of the text is a new one which has not appeared previously has probability $1-p$, and on the complementary event, the $(k+1)$-th word of the text is sampled uniformly at random among its $k$ first words. The book is completed when the text has reached a large length $n\gg 1$; one is then interested in the statistics of occurrences of words, 
and more precisely in the proportion of words which have been used exactly once,  twice, three times, ...  in the whole book. In this setting, Simon \cite{Simon}  pointed out the role of the probability mass function 
\begin{equation}\label{E:YSD}
\rho {\mathrm B}(k, \rho +1), \qquad k\geq 1,
\end{equation}
where ${\mathrm B}$ denotes the beta function and $\rho>0$ a parameter.  He  called \eqref{E:YSD} the Yule distribution, but nowadays \eqref{E:YSD} is rather referred to as the Yule-Simon distribution (with parameter $\rho$). In the present work, only the case $\rho=1/p>1$ arises,
although in the literature, $\rho$ may be any positive real number.

A well-known representation of the Yule-Simon distribution
as a mixture of geometric distributions driven by the exponential of the negative of an exponential variable 
 will be useful for us in the sequel. 
Namely, there is the identity
\begin{equation}\label{E:Mixture}\rho {\mathrm B}(k, \rho +1)=\rho \int_0^{\infty} \e^{-\rho t} (1-\e^{-t})^{k-1} \e^{-t} \dd t, \qquad k\geq 1.
\end{equation}

 We shall need an extension of Simon's result to processes, and in this direction, we introduce the following definition.  

\begin{definition}{(Yule-Simon process)} \label{D1} We call an integer-valued process on the unit time-interval, $Y=(Y(t))_{0\leq t \leq 1}$, a Yule-Simon process with parameter $\rho>0$, if $Y$ is 
 a time-inhomogeneous pure birth process started from $Y(0)=0$ a.s. and 
with time-dependent birth rates 
$$\lambda_k(t)\coloneqq \lim_{h \to 0+} h^{-1} \PP(Y(t+h)=k+1\mid Y(t)=k)\qquad \text{for $0\leq t<1$ and $k\in\NN$}$$
given by
 $$\lambda_0(t)=1/(1-t) \quad \text{and} \quad \lambda_k(t)=k/(\rho t) \text{ for }k\geq 1.$$

  \end{definition}
 In other words, a Yule-Simon process is a counting process\footnote{ This means   that $Y$ is a c\`adl\`ag integer-valued and non-decreasing process started from $0$ and having only jumps of unit length a.s.}  which fulfills the time-inhomogeneous Markov property and has infinitesimal generator at time $t$
$$ \lim_{h\to 0+} h^{-1}\EE[f(Y(t+h))-f(Y(t))\mid Y(t)=k] =\left\{ \begin{matrix}(f(1)-f(0))/(1-t) & \text{ for }k=0,\\
   (f(k+1)-f(k))k /(\rho t) & \text{ for }k\geq 1.\\
  \end{matrix}
  \right.
 $$
  We first point at the following simple construction in terms of a standard Yule process (i.e. a pure birth process with unit birth rate) taken in the logarithmic time.

  \begin{lemma} \label{L1} Let $Z=(Z(t))_{t\geq 0}$ be a standard Yule process 
  started from $Z(0)=1$ a.s., and $U$ an independent  uniform random variable on $[0,1]$.
  Set for every $t\in[0,1]$
  $$Y(t)= \left\{ \begin{matrix}0 & \text{ if }t < U,\\
  Z((\ln t - \ln U)/\rho) & \text{ if }t\geq U.\\
  \end{matrix}
  \right.
  $$
Then $Y=(Y(t))_{0\leq t \leq 1}$ is a Yule-Simon process with parameter $\rho$.   \end{lemma}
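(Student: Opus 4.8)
The plan is to verify the claimed construction by checking that the process $Y$ defined through a time-changed Yule process is indeed a time-inhomogeneous pure birth process with exactly the birth rates prescribed in Definition~\ref{D1}. Since a counting process is determined by its initial value and its (possibly time-dependent) birth rates, it suffices to confirm three things: that $Y$ starts from $0$, that it is a pure birth (counting) process satisfying the inhomogeneous Markov property, and that its infinitesimal birth rates $\lambda_k(t)$ match $1/(1-t)$ for $k=0$ and $k/(\rho t)$ for $k\geq 1$.

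Let me think about the structure. The random variable $U$ uniform on $[0,1]$ governs the birth of the very first individual: before time $U$ we have $Y(t)=0$, and at time $U$ the value jumps to $Z(0)=1$. So $U$ plays the role of the first jump time. **First I would** compute the rate $\lambda_0(t)$ by analysing $\PP(Y(t+h)=1\mid Y(t)=0)$. On the event $\{Y(t)=0\}$ we know $U>t$, and by the lack-of-memory-type computation for the uniform variable, $\PP(U\le t+h\mid U>t)=h/(1-t)+o(h)$, which yields exactly $\lambda_0(t)=1/(1-t)$. For $k\geq 1$, conditionally on $\{Y(t)=k\}$ we have $t\geq U$ and $Z((\ln t-\ln U)/\rho)=k$; the next birth of $Y$ corresponds to the next birth of the underlying standard Yule process $Z$. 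Since a standard Yule process in state $k$ has birth rate $k$ (unit rate per individual), and the logarithmic time change $s=(\ln t-\ln U)/\rho$ has derivative $\dd s/\dd t=1/(\rho t)$, the chain rule transfers the rate $k$ of $Z$ into the rate $k\cdot\frac{1}{\rho t}=k/(\rho t)$ for $Y$ at real time $t$. This is the heart of the matter: the logarithmic-plus-linear time change is precisely what converts the homogeneous rate $k$ of $Z$ into the inhomogeneous rate $k/(\rho t)$.

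**The main technical point** I expect to need care with is the interface at $t=U$ and the verification of the inhomogeneous Markov property, rather than the rate computation itself. Because $U$ is random and itself encodes the first birth time, I would want to argue that conditionally on $\{U=u\}$ for $u\in(0,1)$, the process $(Y(t))_{u\le t\le 1}$ is a deterministic time change of the Markovian process $Z$, and hence Markov; then integrating out $u$ preserves the Markov property once we condition on the current state, because the current state $Y(t)=k$ together with the value $t$ determines the conditional law of the future through $Z$'s Markov property applied at time $(\ln t-\ln u)/\rho$. The delicate step is checking that the future of $Y$ after time $t$, given $Y(t)=k\geq 1$, does \emph{not} depend on the hidden value of $U$ beyond what is already recorded in the pair $(t,k)$; this follows because, given the current Yule state $k$, the standard Yule process is Markov and forgets its past, so the elapsed logarithmic time is irrelevant to future increments once the present state is known.

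**To organise the proof,** I would first fix $u$ and treat the conditioned process, establishing the Markov property and the two birth rates $\lambda_0$ and $\lambda_k$ ($k\ge1$) via the time-change argument and the chain rule, then remove the conditioning on $U$. As a consistency check one can verify that the resulting one-dimensional marginal of $Y(1)$ matches the Yule--Simon distribution~\eqref{E:YSD}: using that a standard Yule process $Z$ started from $1$ satisfies $\PP(Z(s)=k)=\e^{-s}(1-\e^{-s})^{k-1}$, substituting $s=(\ln 1-\ln U)/\rho=-(\ln U)/\rho$ and integrating against the law of $U$ reproduces exactly the mixture representation~\eqref{E:Mixture}, namely $\rho\,\mathrm{B}(k,\rho+1)$. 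This confirms that the construction is correct and simultaneously recovers the marginal law, which is reassuring but is a corollary rather than part of the core verification.
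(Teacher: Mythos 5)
Your proposal is correct and follows essentially the same route as the paper, whose proof is the one-line assertion that the claim ``is readily checked by time-substitution'': you simply carry out that time-substitution check in detail, computing $\lambda_0(t)=1/(1-t)$ from the conditional law of the uniform variable $U$, transferring the Yule rate $k$ through the chain rule $\dd s/\dd t=1/(\rho t)$ to get $\lambda_k(t)=k/(\rho t)$, and using the homogeneous Markov property of $Z$ to see that the future of $Y$ given $Y(t)=k\geq 1$ depends only on $(t,k)$ and not on the hidden value of $U$. The marginal check against the mixture representation \eqref{E:Mixture} is a nice sanity check but, as you note, is not needed (it is the content of Corollary~\ref{C1} in the paper).
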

  \begin{proof} It is readily checked by time-substitution that $Y$ indeed satisfies the requirements of  Definition \ref{D1}.
    \end{proof}
   As an immediate consequence, we can now justify our terminology.
\begin{corollary} \label{C1}
Let $Y=(Y(t))_{0\leq t \leq 1}$ be a Yule-Simon process with parameter $\rho>0$. For every $0< t\leq 1$, we have:
  \begin{enumerate}
  \item $\PP(Y(t)\geq 1)=t$,
  \item the conditional law of $Y(t)$ given $Y(t)\geq 1$ is the Yule-Simon distribution \eqref{E:YSD} with parameter
  $\rho$. 
  \end{enumerate}
  \end{corollary}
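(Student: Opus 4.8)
The plan is to read off both assertions directly from the representation supplied by Lemma~\ref{L1}, writing $Y$ in terms of a standard Yule process $Z$ started from $Z(0)=1$ and an independent uniform variable $U$ on $[0,1]$, namely $Y(t)=Z((\ln t-\ln U)/\rho)$ for $t\geq U$ and $Y(t)=0$ otherwise. The key structural remark is that, since $Z$ is integer-valued, non-decreasing and started from $Z(0)=1$, one has $Z(s)\geq 1$ for every $s\geq 0$; consequently the event $\{Y(t)\geq 1\}$ coincides with $\{U\leq t\}$. Assertion~(i) is then immediate, as $\PP(Y(t)\geq 1)=\PP(U\leq t)=t$ for $t\in[0,1]$.

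For assertion~(ii) I would condition on the event $\{Y(t)\geq 1\}=\{U\leq t\}$. On this event $U$ is uniformly distributed on $[0,t]$, and I would perform the change of variables $s=(\ln t-\ln U)/\rho$, equivalently $U=t\,\e^{-\rho s}$. A direct Jacobian computation shows that, conditionally on $U\leq t$, the variable $s$ is exponentially distributed with rate $\rho$ on $[0,\infty)$, and it remains independent of $Z$. Hence the conditional law of $Y(t)=Z(s)$ given $Y(t)\geq 1$ is precisely the law of $Z$ evaluated at an independent exponential time of rate $\rho$.

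The remaining ingredient is the classical one-dimensional marginal of the standard Yule process: started from a single individual with unit individual birth rate, $Z(s)$ is geometrically distributed, with $\PP(Z(s)=k)=\e^{-s}(1-\e^{-s})^{k-1}$ for $k\geq 1$. I would recall this from the forward Kolmogorov equations (or simply invoke it as standard). Combining the two facts and integrating against the exponential density yields
\[
\PP(Y(t)=k\mid Y(t)\geq 1)=\rho\int_0^\infty \e^{-\rho s}\,(1-\e^{-s})^{k-1}\,\e^{-s}\,\dd s,\qquad k\geq 1,
\]
which is exactly the mixture representation~\eqref{E:Mixture} of the Yule-Simon distribution~\eqref{E:YSD}. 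This identifies the conditional law and completes the proof.

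I expect no serious obstacle, since the statement is genuinely a corollary: the only non-formal input is the geometric marginal of the Yule process, which is textbook material. The one point deserving a little care is checking that the change of variables turns the uniform law of $U$ on $[0,t]$ into the rate-$\rho$ exponential law for $s$ with no residual dependence on $t$; this is precisely what makes the conditional law of $Y(t)$ independent of $t\in(0,1]$, as the statement requires.
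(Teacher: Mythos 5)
Your proposal is correct and follows essentially the same route as the paper: both proofs use the representation of Lemma~\ref{L1} to identify $\{Y(t)\geq 1\}$ with $\{U\leq t\}$, observe that conditionally on this event $(\ln t-\ln U)/\rho$ is exponential with parameter $\rho$ and independent of $Z$, and then combine the geometric marginal of the standard Yule process with the mixture identity~\eqref{E:Mixture}. Your explicit Jacobian verification merely spells out what the paper dispatches by noting that $U/t$ is uniform on $[0,1]$ given $U\leq t$.
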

 \begin{proof} Using the construction of Lemma \ref{L1}, we have $U=\inf\{t\geq0: Y(t)=1\}$, which yields the first assertion.
 Then conditionally on $Y(t)\geq 1$, that is equivalently $U\leq t$, $U/t$ has the uniform distribution on $[0,1]$ and thus $(\ln t - \ln U)/\rho$ has the exponential  distribution with parameter $\rho$. Recall that for every $r>0$, $Z(r)$ has the geometric law with parameter $\e^{-r}$, and that $Z$ is independent of $(\ln t - \ln U)/\rho$. The second assertion now follows from the representation \eqref{E:Mixture}  as a mixture of geometric distributions.
    \end{proof}
    \begin{remark} The argument of the proof shows more generally that 
   conditionally on $Y(t)\geq 1$,  the process  $(Y(s/t))_{0\leq s \leq 1}$ is again a Yule-Simon process with the same parameter.   
    \end{remark}
    
  We conclude this section by recalling 
  the asymptotic behavior 
\begin{equation}
\label{E:AsympYS}
{\mathrm B}(k, \rho +1) \sim \Gamma(\rho +1) k^{-(\rho+1)} \qquad \text{as }k\to\infty;
\end{equation}
This implies in particular that a Yule-Simon variable with parameter $\rho$ has finite moments of any order $r<\rho$, whereas its moment of order $\rho$ is infinite. For future use, we also record the following:
\begin{corollary}\label{L3} Let $Y=(Y(t))_{0\leq t \leq 1}$ be a Yule-Simon process with parameter $\rho$. Then:
\begin{enumerate}
\item If  $\rho>1$,  then for every $t\in[0,1]$
$$
\EE[Y(t)]=\frac{\rho}{\rho-1}\, t.
$$
\item If  $\rho>2$,  then for every $0\leq s \leq t \leq 1$
$$\EE[Y(t)Y(s)]=\frac{\rho^2}{(\rho-1)(\rho-2)} \,s \left(\frac{t}{s}\right)^{1/\rho}.
$$
\end{enumerate}
\end{corollary}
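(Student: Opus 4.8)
The plan is to work directly with the pathwise representation of Lemma \ref{L1}, which reduces both identities to elementary moment computations for the standard Yule process $Z$. Recall from the proof of Corollary \ref{C1} that for every $r\geq 0$ the variable $Z(r)$ is geometric with parameter $\e^{-r}$; in particular $\EE[Z(r)]=\e^{r}$. Since $Y(t)=0$ on $\{U>t\}$ and $Y(t)=Z((\ln t-\ln U)/\rho)$ on $\{U\leq t\}$, conditioning on the uniform variable $U$ and applying Fubini gives
\begin{equation*}
\EE[Y(t)]=\int_0^t \EE\bigl[Z((\ln t-\ln u)/\rho)\bigr]\,\dd u=\int_0^t (t/u)^{1/\rho}\,\dd u.
\end{equation*}
The integrand is $t^{1/\rho}u^{-1/\rho}$, which is integrable near $0$ precisely when $\rho>1$; evaluating the integral then yields $\tfrac{\rho}{\rho-1}\,t$, proving the first assertion.

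The second moment is the substantive step. Conditioning on $U=u$, the product $Y(t)Y(s)$ vanishes unless $u\leq s$, in which case it equals $Z(a)Z(b)$ with $a\coloneqq(\ln s-\ln u)/\rho\leq b\coloneqq(\ln t-\ln u)/\rho$. The key input is the branching (Markov) structure of the Yule process: given $Z(a)$, the value $Z(b)$ is a sum of $Z(a)$ independent copies of $Z(b-a)$, so $\EE[Z(b)\mid Z(a)]=Z(a)\,\e^{b-a}$ and hence $\EE[Z(a)Z(b)]=\e^{b-a}\,\EE[Z(a)^2]$. Using that $Z(a)$ is geometric with parameter $\e^{-a}$, its second moment is $2\e^{2a}-\e^{a}$, whence $\EE[Z(a)Z(b)]=2\e^{a+b}-\e^{b}$. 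Substituting $\e^{a}=(s/u)^{1/\rho}$ and $\e^{b}=(t/u)^{1/\rho}$ and integrating over $u\in(0,s)$ against the uniform density gives
\begin{equation*}
\EE[Y(t)Y(s)]=\int_0^s \Bigl(2(st)^{1/\rho}u^{-2/\rho}-t^{1/\rho}u^{-1/\rho}\Bigr)\,\dd u.
\end{equation*}
Here the term $u^{-2/\rho}$ is integrable near $0$ exactly when $\rho>2$, which is where that hypothesis enters. Carrying out the two elementary integrals and combining constants via $\tfrac{2\rho}{\rho-2}-\tfrac{\rho}{\rho-1}=\tfrac{\rho^2}{(\rho-1)(\rho-2)}$ leaves $\tfrac{\rho^2}{(\rho-1)(\rho-2)}\,s^{1-1/\rho}t^{1/\rho}$, and rewriting $s^{1-1/\rho}t^{1/\rho}=s(t/s)^{1/\rho}$ gives the stated formula.

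The only genuinely non-routine point is the covariance identity $\EE[Z(a)Z(b)]=\e^{b-a}\EE[Z(a)^2]$, which rests on the branching structure of $Z$; once this is in hand everything reduces to integrating powers of $u$, and the moment thresholds $\rho>1$ and $\rho>2$ appear automatically as the integrability conditions at the origin, in agreement with the tail asymptotics \eqref{E:AsympYS}. As a sanity check one could instead derive both formulas from Corollary \ref{C1}, computing the first two moments of the Yule-Simon distribution directly and combining them with $\PP(Y(t)\geq 1)=t$ and the scaling noted in the Remark, but the Yule-process route treats the joint moment far more transparently.
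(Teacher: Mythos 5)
Your proof is correct, and it takes a genuinely self-contained route that differs from the paper's in its bookkeeping, though the decisive ingredient is shared. The paper reads the marginal moments off Corollary \ref{C1}: it quotes the first moment $\rho/(\rho-1)$ and the variance of the Yule-Simon distribution, multiplies by $\PP(Y(t)\geq 1)=t$ to get (i) and $\EE[Y(t)^2]=\tfrac{\rho^2}{(\rho-1)(\rho-2)}\,t$, and then obtains (ii) by conditioning on $Y(s)$, using the branching property of the Yule process through Lemma \ref{L1} to derive $\EE[Y(t)\mid Y(s)=k]=k(t/s)^{1/\rho}$. You instead condition on the uniform variable $U$ from Lemma \ref{L1} and integrate the geometric moments of $Z$ directly, so you never need the moments of the Yule-Simon law at all — your integrals re-derive them along the way. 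The key identity is the same in both arguments, namely $\EE[Z(b)\mid Z(a)]=Z(a)\,\e^{\,b-a}$ from the branching structure; the paper applies it at the level of $Y$ after conditioning on $Y(s)$, while you apply it at the level of $Z$ before integrating out $U$, via $\EE[Z(a)Z(b)]=\e^{\,b-a}\EE[Z(a)^2]=2\e^{\,a+b}-\e^{\,b}$. Your algebra checks out: the geometric second moment $2\e^{2a}-\e^{a}$ is right, the integrals $\int_0^s u^{-2/\rho}\,\dd u$ and $\int_0^s u^{-1/\rho}\,\dd u$ produce $\tfrac{\rho}{\rho-2}s^{1-2/\rho}$ and $\tfrac{\rho}{\rho-1}s^{1-1/\rho}$, and the combination $\tfrac{2\rho}{\rho-2}-\tfrac{\rho}{\rho-1}=\tfrac{\rho^2}{(\rho-1)(\rho-2)}$ is correct. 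What your route buys is transparency about the hypotheses — $\rho>1$ and $\rho>2$ appear as integrability of $u^{-1/\rho}$ and $u^{-2/\rho}$ at the origin, matching the tail asymptotics \eqref{E:AsympYS} — and independence from the quoted Yule-Simon moment values; what the paper's route buys is brevity, since Corollary \ref{C1} and the classical Yule-Simon moments dispatch the marginals in one line.
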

\begin{proof}
The first moment of the Yule-Simon law with parameter $\rho$ equals $\rho/(\rho-1)$ if $\rho>1$, 
and its variance equals $\rho^2(\rho-1)^{-2} (\rho-2)^{-1}$ if $\rho >2$. The formula (i) thus follows from Corollary \ref{C1}, and further we have for the second moment
 \begin{equation} \label{E:mean2YS}
\EE[Y(t)^2]= \frac{\rho^2}{(\rho-1)(\rho-2)}\, t\qquad \text{ if }\rho>2.
\end{equation}
On the other hand, a Yule process $Z$ grows exponentially in time and fulfills the 
branching property, so for every $k\geq 1$ and $0<s<t$, there is the identity 
$$\EE[Z(t)\mid Z(s)=k]= k\e^{t-s}.$$ Thanks to Lemma \ref{L1} and Corollary \ref{C1}, this yields
$$\EE[Y(t)\mid Y(s)=k]= k(t/s)^{1/\rho}.$$ 
The formula (ii) now follows from \eqref{E:mean2YS}.
 \end{proof}

\subsection{Random series of Yule-Simon processes}\label{s:RPI}

   Next we turn our attention to convergence of certain series $\sum_j Y_j x_j  $ of independent Yule-Simon processes, where the  $x_j$ are random vectors in $\RR $ that should be thought of as the sizes of the jumps made by some $d$-dimensional L\'evy process during the unit time interval.  
The  purpose of this section is to show that, under appropriate hypotheses and possibly after a proper compensation (i.e. centering), the sum of such a series does make sense. Although this is similar to the construction of L\'evy processes, we shall see that integrability properties of Yule-Simon variables play a crucial role.

 Specifically, we write $\QQ$ for the distribution of the Yule-Simon process with some fixed parameter $\rho>0$, say on the space of counting functions on the unit time interval.   Consider a Poisson point  measure ${\mathcal N}$ with intensity 
 $\nu\otimes \QQ$, where $\nu$ is a L\'evy measure on $\RR$, in the sense that 
 \begin{equation}\label{E:CLK}
 \int_{\RR} (1\wedge |x|^2)\nu(\dd x)<\infty, 
 \end{equation}
and  $|\cdot |$ stands for the Euclidean norm. The product form of this intensity measure allows us to assume that ${\mathcal N}$ is given in the form
  $${\mathcal N}=\sum_{j} \delta_{(x_j, Y_j)}$$
 with $\sum_{j} \delta_{x_j}$ a Poisson point measure on $\RR$ with intensity $\nu$, and $(Y_j)_{j\geq 1}$ an independent sequence of i.i.d. marks with law $\QQ$ (i.e. a sequence of i.i.d. Yule-Simon processes). 

We write  for $0\leq  a<b\leq \infty$
$$  {\mathcal N}_{a,b}\coloneqq \sum_{j} \Ind_{\{a\leq |x_j|< b\}}\delta_{(x_j, Y_j)},$$
which is again a Poisson point measure, now with  intensity
 $\nu_{a,b}\otimes \QQ$, where 
 $$\nu_{a,b}(\dd x) \coloneqq  \Ind_{\{a\leq |x|< b\}}\nu(\dd x).$$
Recall from the superposition property of Poisson point measures that for every $k\geq 1$ and $0\leq a_1<a_2< \ldots <a_k\leq \infty$,
the  Poisson point measures $  {\mathcal N}_{a_1, a_2},  {\mathcal N}_{a_2, a_3}, \ldots ,  {\mathcal N}_{a_{k-1}, a_k}$ are independent.

For every $0<a<b\leq \infty$,  $\nu_{a,b}$ is a finite measure and ${\mathcal N}_{a,b}$ possesses only finitely many atoms a.s. We can always define the step process
 \begin{equation}\label{E:defsig}
 \Sigma_{a,b}(t)\coloneqq \sum_{j} \Ind_{\{a\leq |x_j|<b\}} Y_j(t) x_j, \qquad t\in[0,1].
 \end{equation}

We assume henceforth that $\rho>1$, and  observe also from  Corollary \ref{L3}(i) and Campbell's formula that for every $a\in(0,1]$ and $0\leq t \leq 1$, there is the identity
$$\EE \left[ \sum_{j} \Ind_{\{a\leq |x_j|< 1\}} Y_j(t) |x_j|\right]= t \rho (\rho-1)^{-1}\int_{\{a\leq |x|<1\}} |x| \nu (\dd x).$$
So, if the L\'evy measure fulfills 
\begin{equation}\label{E:CVB}
\int_{\RR}(1\wedge |x|) \nu(\dd x)<\infty,
\end{equation}
which is a stronger requirement than \eqref{E:CLK},
then  $\Sigma_{0,1}(t)$ is a well-defined process given by a series that converges absolutely in $L^1(\PP)$,
and $\Sigma(t)\coloneqq \Sigma_{0,1}(t)+ \Sigma_{1,\infty}(t)$ is an a.s. absolutely convergent series.

In the general case when the L\'evy measure only fulfills \eqref{E:CLK}, we use again Campbell's formula for $a\in(0,1]$ to compute the mean vector
$$\EE \left[ \sum_{j} \Ind_{\{a\leq |x_j|< 1\}} Y_j(t) x_j\right]= t \rho (\rho-1)^{-1}\int_{\{a\leq |x|<1\}} x \nu (\dd x)$$
and 
define the compensated (or centered) sum
$$\Sigma^{\rm (c)}_{a,1}(t)\coloneqq \Sigma_{a,1}(t) -  t \rho (\rho-1)^{-1}\int_{\{a\leq |x|<1\}} x \nu (\dd x).$$ 
To investigate the behavior of this quantity as $a\to 0+$, it is  convenient to make the substitution $a=\e^{-r}$ with $r\geq 0$.
For every $t\in(0,1]$, the process $\left (\Sigma^{\rm (c)}_{\e^{-r},1}(t)\right)_{r\geq 0}$ has independent and centered increments, and is therefore a martingale. Whether or not this martingale converges as $r\to \infty$ depends crucially
on the so-called Blumenthal-Getoor (upper) index of the L\'evy measure $\nu$, which is defined as
\begin{equation}\label{E:BGind}
\beta(\nu)\coloneqq \inf\left \{b>0: \int_{|x|\leq 1} |x|^b\nu(\dd x) <\infty \right \}.
\end{equation}
In particular, we always have $\beta(\nu)\leq 2$, and if \eqref{E:CVB} fails, then $\beta(\nu)\geq 1$.  Further, in the stable case $\nu(\dd x) = |x|^{-\alpha-d} \dd x$ for some $0<\alpha<2$, one has plainly $\beta(\nu)=\alpha$. 
\begin{lemma}\label{L2} Suppose $\rho>1$ and take any $t\in(0,1]$.

\begin{enumerate} 
\item If $\beta(\nu)<\rho$, then the martingale $\left (\Sigma^{\rm (c)}_{\e^{-r},1}(t)\right)_{r\geq 0}$
is uniformly integrable, and in particular the compensated sum $\Sigma^{\rm (c)}_{a,1}(t)$
converges a.s. as $a\to 0+$. 

\item If $\beta(\nu)>\rho$, then a.s., $\Sigma^{\rm (c)}_{a,1}(t)$ does not converge as $a\to 0+$. 

\end{enumerate}

\end{lemma}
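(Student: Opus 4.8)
The plan is to reduce both parts to the asymptotic behaviour of a single scalar integral and to one sharp estimate on Yule--Simon moments. Fix $t\in(0,1]$. Since $\Sigma^{\rm(c)}_{a,1}(t)$ is the compensated Poisson integral of the map $(x,Y)\mapsto \Indic{a\le|x|<1}\,Y(t)x$ against the intensity $\nu\otimes\QQ$, its limiting behaviour as $a\to0+$ is governed by
$$J_a\coloneqq \int_{\{a\le|x|<1\}} \EE_{\QQ}\!\left[\min\!\big(Y(t)^2|x|^2,\,1\big)\right]\nu(\dd x).$$
The crux of the whole lemma is the claim that the inner expectation is comparable to $|x|^{\rho}$ near the origin.

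First I would establish the key estimate: for $1<\rho<2$ there are constants $0<c\le C<\infty$ (depending on $t,\rho$) with $c|x|^{\rho}\le \EE_{\QQ}[\min(Y(t)^2|x|^2,1)]\le C|x|^{\rho}$ for all small $|x|$, while for $\rho\ge2$ the expectation is $O(|x|^2)$. To prove it I split the expectation at the level $Y(t)\approx|x|^{-1}$, writing it as $|x|^2\,\EE_{\QQ}[Y(t)^2\Indic{Y(t)\le|x|^{-1}}]+\PP(Y(t)>|x|^{-1})$; by Corollary \ref{C1} and the tail asymptotics \eqref{E:AsympYS} a Yule--Simon variable satisfies $\PP(Y(t)>K)\asymp t K^{-\rho}$ and $\EE_{\QQ}[Y(t)^2\Indic{Y(t)\le K}]\asymp t K^{2-\rho}$, so with $K=|x|^{-1}$ both contributions are of order $t|x|^{\rho}$. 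Recalling the definition \eqref{E:BGind} of the Blumenthal--Getoor index, this yields the pivotal equivalence $\lim_{a\to0+}J_a<\infty \Leftrightarrow \int_{\{|x|<1\}}|x|^{\rho}\nu(\dd x)<\infty \Leftrightarrow \beta(\nu)<\rho$ (for $\rho\ge2$ one has $\beta(\nu)\le2\le\rho$, so $J_a$ stays bounded automatically).

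For part (i) I would not go through $J_a$ but upgrade convergence to uniform integrability via an $L^q$ bound. As $\beta(\nu)<\rho$ and $\beta(\nu)\le2$, pick $q\in(1,2]$ with $\beta(\nu)<q<\rho$ (take $q=2$ when $\rho>2$). Because $\Sigma^{\rm(c)}_{a,1}(t)$ is a sum of independent, centred contributions of the Poisson atoms, the classical $L^q$-inequality for compensated Poisson integrals (von Bahr--Esseen, $1\le q\le2$) gives $\EE[|\Sigma^{\rm(c)}_{a,1}(t)|^{q}]\le C_q\,\EE_{\QQ}[Y(t)^{q}]\int_{\{a\le|x|<1\}}|x|^{q}\nu(\dd x)$. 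Here $\EE_{\QQ}[Y(t)^{q}]<\infty$ since $q<\rho$ (by \eqref{E:AsympYS}) and $\int_{\{|x|<1\}}|x|^{q}\nu(\dd x)<\infty$ since $q>\beta(\nu)$, so the right-hand side is bounded uniformly in $a$. Thus the martingale $(\Sigma^{\rm(c)}_{\e^{-r},1}(t))_{r\ge0}$ is bounded in $L^{q}$ for some $q>1$, hence uniformly integrable, and therefore converges a.s.

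For part (ii), where $\beta(\nu)>\rho$ and hence $J_a\to\infty$, I would argue by characteristic functions. By the L\'evy--Khintchine formula the characteristic function of $\Sigma^{\rm(c)}_{a,1}(t)$ is $\exp(\psi_a(\theta))$ with $\mathrm{Re}\,\psi_a(\theta)=-\int_{\{a\le|x|<1\}}\EE_{\QQ}[1-\cos(Y(t)\ip{\theta}{x})]\,\nu(\dd x)$, which is nonpositive and decreases as $a\downarrow0$. Integrating over $\theta$ in a fixed ball $B$ about the origin and using the elementary bound $\int_B(1-\cos(k\ip{\theta}{x}))\dd\theta\asymp\min(k^2|x|^2,1)$ (valid for all $k\ge0$), Tonelli's theorem gives $\int_B(-\mathrm{Re}\,\psi_a(\theta))\dd\theta\asymp J_a\to\infty$. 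If $\Sigma^{\rm(c)}_{a,1}(t)$ converged in distribution, its limit law would have a continuous characteristic function that is non-vanishing on a small ball about $0$, forcing $\mathrm{Re}\,\psi_a$ to stay bounded there and the integral to stay finite --- a contradiction. Hence there is no convergence in distribution, and since $(\Sigma^{\rm(c)}_{\e^{-r},1}(t))_{r\ge0}$ has independent increments, convergence in law, in probability and a.s.\ coincide by L\'evy's theorem, so $\Sigma^{\rm(c)}_{a,1}(t)$ does not converge a.s. The main obstacle is precisely the sharp two-sided key estimate, whose lower bound encodes the heavy tail \eqref{E:AsympYS} of the Yule--Simon law and thereby produces the exponent $\rho$ and the phase transition at $\rho=\beta(\nu)$.
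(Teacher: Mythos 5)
Your proof is correct, and the two parts relate differently to the paper's argument. Part (i) is essentially the paper's proof in disguise: the paper picks $q\in(\beta(\nu)\vee 1,\rho\wedge 2)$, applies the Burkholder--Davis--Gundy inequality, bounds $\left(\sum_s\Delta_s^2\right)^{q/2}\leq\sum_s|\Delta_s|^q$, and evaluates via Campbell's formula to get exactly your bound $C_q\,\EE_\QQ[Y(t)^q]\int_{\{|x|<1\}}|x|^q\nu(\dd x)$; citing the von Bahr--Esseen inequality for compensated Poisson integrals is just this chain packaged as a black box. Part (ii), however, is a genuinely different route. The paper argues pathwise: by \eqref{E:AsympYS} and Corollary \ref{C1}, $\PP(Y(t)>1/|x|)\sim t\,\Gamma(\rho+1)|x|^\rho$, and since $\beta(\nu)>\rho$ forces $\int_{\{|x|\leq1\}}|x|^\rho\nu(\dd x)=\infty$, the point measure ${\mathcal N}$ has a.s.\ infinitely many atoms with $Y(t)|x|>1$, so the martingale has infinitely many jumps of size $>1$ and its paths cannot be Cauchy. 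You instead integrate $-\Re\psi_a$ over a ball, use the two-sided estimate $\EE_\QQ[\min(Y(t)^2|x|^2,1)]\asymp t|x|^\rho$ (your truncated-moment computation is sound, and the restriction $\rho<2$ is automatic in case (ii) since $\beta(\nu)\leq 2$), and derive a contradiction with Lévy's continuity theorem; both proofs thus hinge on the same heavy tail \eqref{E:AsympYS}, yours analytically, the paper's pathwise. A concrete advantage of your route: it is insensitive to whether $\nu$ charges spheres, so you bypass entirely the technical patches (i$'$) and (ii$'$) that the paper needs because its quadratic-variation expression and continuous compensation require at most one atom per radius.

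Two small repairs. First, in (i), when $\beta(\nu)=2<\rho$ your choice $q=2$ does not satisfy $q>\beta(\nu)$; the justification should be that $\int_{\{|x|<1\}}|x|^2\nu(\dd x)<\infty$ holds anyway by \eqref{E:CLK} (the paper makes the same special case explicit). Second, in (ii), Lévy's equivalence theorem upgrades failure of convergence in distribution only to $\PP(\Sigma^{\rm(c)}_{a,1}(t)\text{ converges})<1$; to get the stated conclusion that a.s.\ there is \emph{no} convergence, add Kolmogorov's zero--one law, noting that the convergence event is a tail event for the independent pieces ${\mathcal N}_{a_{k+1},a_k}$ --- the paper's jump argument delivers probability zero directly. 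Also, your displayed equivalence ``$\int_{\{|x|<1\}}|x|^\rho\nu(\dd x)<\infty\Leftrightarrow\beta(\nu)<\rho$'' fails at the boundary $\beta(\nu)=\rho$; this is harmless since the lemma excludes that case, but the implication signs should be one-directional as used.
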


\begin{proof} For the sake of simplicity, we shall establish first parts (i) and (ii) under the assumption that the function $r\mapsto \nu(\{ |x|>\e^{-r}\})$ is continuous on $(0,\infty)$, i.e. that the 
L\'evy measure $\nu$ gives no mass to spheres. In parts (i') and (ii') of this proof, we shall briefly explain how this additional assumption can be removed.

(i) The martingale $\left (\Sigma^{\rm (c)}_{\e^{-r},1}(t)\right)_{r\geq 0}$ is purely discontinuous with quadratic variation process $\sum_{0< s \leq r} \Delta_s^2$, where $\Delta_s$ denotes the norm of the (possible) jump at $s$. So 
$\Delta_s= Y_j(t) |x_j|$ if the Poisson measure ${\mathcal N}$ has an atom at $(x_j, Y_j)$ with $|x_j|=\e^{-s}$ (our standing assumption ensures that a.s., ${\mathcal N}$ possesses at most one such atom for every $s>0$),
and $\Delta_s=0$ otherwise. 

Suppose first that $\beta(\nu)<2$ and pick any $q\in(\beta(\nu)\vee 1,\rho\wedge 2)$.
Recall from the Burkholder-Davis-Gundy inequality that   this martingale is bounded in $L^q(\PP)$ if and only if 
\begin{equation}\label{E:BDG}
\EE\left[ \left( \sum_{s\geq 0} \Delta_s^2\right)^{q/2}\right]<\infty.
\end{equation}
Since $q\leq 2$, there is basic inequality 
$$\left( \sum_{s\geq 0} \Delta_s^2\right)^{q/2} \leq \sum_{s\geq 0} |\Delta_s|^{q},$$
and we only need to check that
$$\EE\left[  \sum_{j} \Ind_{\{|x_j|<1\}}Y_j(t)^q |x_j|^q \right]<\infty.$$
In this direction, we 
get from Campbell's formula that 
$$\EE\left[  \sum_{j}  \Ind_{\{|x_j|<1\}}Y_j(t)^q |x_j|^q \right]= \EE[Y(t)^q] \int_{\{|x|<1\}} |x|^q \nu(\dd x).$$
Since $Y(t)\in L^q(\PP)$ (by \eqref{E:AsympYS} and Corollary \ref{C1}, as $q< \rho$) and 
$\int_{\{|x|<1\}} |x|^q \nu(\dd x)< \infty$ (from the definition of the Blumenthal-Getoor index, as  $q>\beta(\nu)$), our claim is established. 

Then suppose $\beta(\nu)=2<\rho$. Since $\int_{|x|\leq 1} x^2\nu(\dd x)<\infty$ anyway, we can perform the same calculation as above for $q=2$, and check that the martingale is bounded in $L^2(\PP)$. 

(ii) Recall from \eqref{E:AsympYS} and Corollary \ref{C1} that
$$ \PP(Y(t)>1/|x|)\sim t \Gamma(\rho+1) |x|^{\rho}\qquad \text{as } x\to 0.$$
Our assumption $\beta(\nu)>\rho$ implies that $\int_{|x|\leq 1} |x|^{\rho} \nu(\dd x)=\infty$, and it follows that
the Poisson point measure ${\mathcal N}$ possesses a.s. infinitely many atoms $(x,Y)$ with $Y(t)|x|>1$ (but of course only finitely many atoms with $|x|>\e^{-r}$). 
Hence the martingale $\left (\Sigma^{\rm (c)}_{\e^{-r},1}(t)\right)_{r\geq 0}$ has infinitely many jumps of length greater than $1$ a.s., and therefore  cannot converge.

(i') The argument in (i) does not directly apply when  the L\'evy measure $\nu$ can give a positive mass to some spheres, because  the Poisson measure ${\mathcal N}$ may now have several atoms on such spheres, and the quadratic variation process has then a less handy expression. However this is only a minor difficulty which can easily  be overcome, roughly speaking,  by stretching the parameter $r\geq 0$. Typically, if $\nu(\{|x|=\e^{-r}\})=m>0$, we slow down time, spending an amount of time  $m$ on the sphere with radius $\e^{-r}$,  and spread  the atoms of ${\mathcal N}$ that belong to that sphere uniformly  at random on the stretched time interval. Doing so, the original compensated martingale can be expressed in the form
$\Sigma^{\rm (c)}_{\e^{-s},1}(t)=M(s')$, where  $M$ is martingale with independent increments of the same type as in (i) (i.e. a compensated Poisson integral where the compensation is now continuous in the parameter $s'$) and $s'-s$ is the sum of the masses assigned by $\nu$ to spheres with radius between $\e^{-s}$ and $1$. The same calculation as in (i) can be applied to the martingale $M$ and show that the latter is uniformly integrable when $\beta(\nu)<\rho$.  {\em A fortiori}, the same holds for the original martingale $\Sigma^{\rm (c)}_{\e^{-s},1}(t)$, since it is merely a deterministic  time-change of the former.

(ii') When $\nu$ gives a positive mass to some spheres, the compensation for the Poissonian integral now yields deterministic jumps at the $r$'s for which $\nu(\{|x|=\e^{-r}\})>0$. However this induces no real difficulty and the argument in (ii) can easily be adapted. Specifically, we decompose $\nu=\nu' + \nu''$, where $\nu'$ is a L\'evy measure giving no mass to any sphere, and $\nu''$ a L\'evy measure supported by an at most countable union of spheres. If $\int_{|x|\leq 1} |x|^{\rho} \nu'(\dd x)=\infty$, then the argument of (ii) applies exactly the same. In the case when $\int_{|x|\leq 1} |x|^{\rho} \nu''(\dd x)=\infty$, we focus on the  $r$'s for which $\nu(\{|x|=\e^{-r}\})>0$ and check readily that the compensated martingale has again infinitely many jumps of length greater than $1$ a.s.
\end{proof}
\begin{remark} When the L\'evy measure is symmetric, the compensation term is always zero and $\Sigma_{a,1}(t)=
\Sigma^{(c)}_{a,1}(t)$ for all $a\in(0,1]$. Thus, if \eqref{E:CVB} fails and $\rho >\beta(\nu)$, 
then series $\sum Y_j(t)x_j$ is only semi-convergent, that is the limit $\lim_{a\to 0+} \sum \Ind_{\{a\leq |x_j| < 1\}}Y_j(t)x_j$ exists in $\RR$ although $\sum Y_j(t)|x_j|=\infty$ a.s.
\end{remark}

Throughout the rest of this work, we  write $\Sigma^{\rm (c)}_{0,1}(t)$ for the a.s. limit of $\Sigma^{\rm (c)}_{a,1}(t)$ as $a\to 0+$ whenever $\beta(\nu)<\rho$. 
We now compute the characteristic function of this process, and in this direction, we  write $\theta\cdot x$ for the scalar product
of two vectors $\theta$ and $x$ in $\RR$.

\begin{corollary}\label{C2} Let $\rho>1$ and $\nu$ be a L\'evy measure with Blumenthal-Getoor index
$\beta(\nu) < \rho$. Define for $\theta \in \RR$
$$\Phi_1(\theta)\coloneqq \int_{|x|\geq 1}(1-\e^{\iu \theta \cdot x})\nu(\dd x)
\ \text{ and } \ \Phi_0^{\rm (c)}(\theta)\coloneqq \int_{|x|< 1}(1-\e^{\iu \theta \cdot x}+\iu \theta \cdot x)\nu(\dd x).$$
The processes $\left(\Sigma_{1,\infty}(t)\right)_{0\leq t \leq 1}$
and $\left(\Sigma^{\rm (c)}_{0,1}(t)\right)_{0\leq t \leq 1}$ are independent. Further, for all $k\geq 1$, 
$\theta_1, \ldots, \theta_k\in\RR$
and  $t_1, \ldots, t_k\in[0,1]$, there are the identities 
$$\EE\left[\exp\left\{ \iu \sum_{j=1}^k \theta_j \cdot \Sigma_{1,\infty}(t_j)\right\} \right]=\exp\left\{ - \EE
\left[ \Phi_1\left(\sum_{j=1}^k Y(t_j)  \theta_j\right)\right]\right\}$$ 
and 
$$\EE\left[\exp\left\{ \iu \sum_{j=1}^k \theta_j \cdot \Sigma^{\rm (c)}_{0,1}(t_j)\right\} \right]=\exp\left\{ - \EE
\left[ \Phi_0^{\rm (c)}\left(\sum_{j=1}^k Y(t_j) \theta_j \right)\right]\right\}$$
where $Y=(Y(t))_{0\leq t \leq 1}$ denotes a Yule-Simon process with parameter $\rho$.
\end{corollary}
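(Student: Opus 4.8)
The plan is to reduce both identities to the exponential (master) formula for Poisson point measures, and then to handle the compensation and the limit $a\to0+$ with the integrability afforded by the hypothesis $\beta(\nu)<\rho$. The independence assertion is the easiest part: $\Sigma_{1,\infty}$ is a measurable functional of the restriction $\mathcal N_{1,\infty}$, whereas $\Sigma^{\rm (c)}_{0,1}$ is obtained as an almost sure limit from $\mathcal N_{0,1}$. Since $\{|x|\ge1\}$ and $\{|x|<1\}$ are disjoint, these two restricted Poisson measures are independent, and hence so are the two processes.

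For the first characteristic function I would rewrite the linear combination as a single Poisson integral. Using $\theta_j\cdot(Y(t_j)x)=Y(t_j)(\theta_j\cdot x)$ we get
$$\sum_{j=1}^k\theta_j\cdot\Sigma_{1,\infty}(t_j)=\int \Ind_{\{|x|\ge1\}}\Bigl(\sum_{j=1}^kY(t_j)\theta_j\Bigr)\cdot x\;\dd\mathcal N(x,Y),$$
so that, writing $F(x,Y)$ for the integrand, the exponential formula yields $\exp\{\int(\e^{\iu F}-1)\,\dd(\nu\otimes\QQ)\}$. Here $|\e^{\iu F}-1|\le2$ and the integrand is supported on the set $\{|x|\ge1\}$, which has finite $\nu\otimes\QQ$-measure, so Fubini applies; performing the $x$-integration first identifies the inner integral with $-\Phi_1(\sum_jY(t_j)\theta_j)$, and the remaining integration against $\QQ$ is precisely an expectation over a Yule-Simon process, giving the first identity.

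For the second identity I would first fix a truncation level $a\in(0,1)$. The same exponential formula applied to $\Sigma_{a,1}$, together with the deterministic compensating shift $-\iu\sum_jt_j\frac{\rho}{\rho-1}\theta_j\cdot\int_{\{a\le|x|<1\}}x\,\nu(\dd x)$, which by Corollary \ref{L3}(i) and $\EE[Y(t_j)]=\frac{\rho}{\rho-1}t_j$ equals $-\iu\,\EE[\int_{\{a\le|x|<1\}}\vartheta\cdot x\,\nu(\dd x)]$ with $\vartheta\coloneqq\sum_{j=1}^kY(t_j)\theta_j$, combine to give
$$\EE\Bigl[\exp\Bigl\{\iu\sum_{j=1}^k\theta_j\cdot\Sigma^{\rm (c)}_{a,1}(t_j)\Bigr\}\Bigr]=\exp\Bigl\{-\EE\Bigl[\int_{\{a\le|x|<1\}}\bigl(1-\e^{\iu\vartheta\cdot x}+\iu\vartheta\cdot x\bigr)\,\nu(\dd x)\Bigr]\Bigr\}.$$
Letting $a\to0+$, the left-hand side converges because $\Sigma^{\rm (c)}_{a,1}(t_j)\to\Sigma^{\rm (c)}_{0,1}(t_j)$ almost surely by Lemma \ref{L2}(i) and characteristic functions are bounded by $1$, so the claimed identity will follow once the right-hand exponent is shown to converge to $-\EE[\Phi_0^{\rm (c)}(\vartheta)]$.

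That last convergence is the only real obstacle, and I would settle it by dominated convergence. Choosing $q\in(\beta(\nu)\vee1,\rho\wedge2)$ as in the proof of Lemma \ref{L2} and using the elementary bound $|1-\e^{\iu u}+\iu u|\le C_q|u|^q$ valid for $1\le q\le2$, one gets the $a$-uniform domination $|1-\e^{\iu\vartheta\cdot x}+\iu\vartheta\cdot x|\le C_q|\vartheta|^q|x|^q$. Here $\int_{\{|x|<1\}}|x|^q\,\nu(\dd x)<\infty$ since $q>\beta(\nu)$, while $\EE[|\vartheta|^q]<\infty$ since $q<\rho$ forces $Y(t_j)\in L^q(\PP)$ by \eqref{E:AsympYS} and Corollary \ref{C1}. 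This is exactly the integrability budget behind Lemma \ref{L2}, and it guarantees both that $\Phi_0^{\rm (c)}(\vartheta)$ is a.s.\ finite and that the limit may be exchanged with the expectation, completing the argument.
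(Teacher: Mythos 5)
Your proposal is correct and follows essentially the same route as the paper: the exponential (Fourier) formula for Poisson point measures gives both identities with the same intermediate formula for the compensated sums $\Sigma^{\rm (c)}_{a,1}$, and the limit $a\to 0+$ is handled by dominated convergence using precisely the integrability budget $\beta(\nu)<q<\rho$. The one quibble is your choice of exponent: the interval $(\beta(\nu)\vee 1,\rho\wedge 2)$ is empty when $\beta(\nu)=2$ (which can occur here, forcing $\rho>2$); in that case take $q=2$, which still works since $\int_{\{|x|<1\}}|x|^2\,\nu(\dd x)<\infty$ holds for any L\'evy measure, the elementary bound $|1-\e^{\iu u}+\iu u|\leq |u|^2/2$ is available, and $\rho>2$ gives $Y(t)\in L^2(\PP)$ — this is exactly the case split made in the paper's proof of Lemma \ref{L2}. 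The paper's own proof of the corollary sidesteps this by choosing any $q\in(\beta(\nu),\rho)$, possibly with $q>2$, and invoking the estimate $\lim_{|\theta|\to\infty}|\theta|^{-q}\Phi^{\rm (c)}_a(\theta)=0$ uniformly in $a$ from Lemma 3.1 of Blumenthal--Getoor; your pointwise bound $|1-\e^{\iu\vartheta\cdot x}+\iu\vartheta\cdot x|\leq C_q|\vartheta|^q|x|^q$ is more self-contained but only valid for $q\leq 2$, hence the needed case distinction.
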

\begin{proof} The claim that $\Sigma_{1,\infty}$
and $\Sigma^{\rm (c)}_{0,1}$ are independent stems from the independence of the random measures
${\mathcal N}_{1,\infty}$ and  ${\mathcal N}_{0,1}$. The expression for the characteristic function of $\Sigma_{1,\infty}$
follows from the construction of the latter and the formula for the Fourier functional of Poisson point measures.
The same argument also yields  a similar formula for the characteristic function of the compensated sum $\Sigma^{\rm (c)}_{a,1}$ for every $0<a<1$, namely 
$$\EE\left[\exp\left\{ \iu \sum_{j=1}^k \theta_j \cdot \Sigma^{\rm (c)}_{a,1}(t_j)\right\} \right]=\exp\left\{ - \EE
\left[ \Phi_a^{\rm (c)}\left(\sum_{j=1}^k Y(t_j) \theta_j \right)\right]\right\}$$
with 
$$ \Phi_a^{\rm (c)}(\theta)\coloneqq \int_{a\leq |x|< 1}(1-\e^{\iu \theta \cdot x}+\iu \theta \cdot x)\nu(\dd x).$$

We then let $a\to 0+$, so  $\Phi_a^{\rm (c)}$ converges pointwise to  $\Phi_0^{\rm (c)}$.
On the other hand, pick any $q\in (\beta(\nu), \rho)$, and observe from the proof of Lemma 3.1 in \cite{BG61} that
$$\lim_{|\theta|\to \infty} |\theta|^{-q} \Phi_a^{\rm (c)}(\theta)=0, \qquad \text{uniformly in }a\in(0,1).$$
Recall that $Y(1)\in L^q(\PP)$; our last assertion now follows by letting $a\to 0+$ and applying Lebesgue dominated convergence. 
\end{proof}

\subsection{Noise reinforced L\'evy processes and their characteristic functions}
To start this section with, we consider the elephant random walk, that is the step reinforced random walk
$\hat S=(\hat S(n))_{n\in\NN}$ in the one-dimensional simple symmetric framework. It is known that
$\hat S$ has a diffusive behavior when the memory parameter $p<1/2$, 
and more precisely, it has been shown in \cite{BaurBer} that
the rescaled process $(n^{-1/2}\hat S(\lfloor nt\rfloor))_{t\geq 0}$ converges in distribution
to  the centered Gaussian process 
$\hat B=(\hat B(t))_{t\geq 0}$ with covariance 
\begin{equation}\label{E:coverw}
\EE\left[\hat B(s) \hat B(t)\right] =  \frac{t^ps^{1-p}}{1-2p} \qquad \text{for } 0\leq s \leq t.
\end{equation}
We  also refer to \cite{Bercu, ColGavSch1, ColGavSch2} for related works.

We sketch here a quick proof for this fact which may be of independent interest.
It is readily seen that the elephant random walk is a time-inhomogeneous Markov chain on $\ZZ$.
Specifically, the conditional probability
that its $(n+1)$-th increment is $\hat X_{n+1}=1$ given the trajectory up to time $n$, $(\hat S(j))_{0\leq j \leq n}$,  obviously equals 
$$(1-p)/2 + p(1/2+\hat S(n)/n)= 1/2 + p\hat S(n)/n,$$
and the conditional probability that $\hat X_{n+1}=-1$ equals $1/2 - p\hat S(n)/n$.
By classical diffusion-approximation techniques (see e.g. Section 7.4 in Ethier and Kurtz \cite{EtKu})
we deduce that $n^{-1/2}\hat S(\lfloor nt\rfloor)$ converges to the time-inhomogeneous diffusion
with infinitesimal generator at time $t$ given by ${\mathcal G}_tf(x)=\frac{1}{2} f''(x) + p x t^{-1}f'(x)$. We can also represent the limit process as the solution to the stochastic differential equation
$$\dd \hat B(t)= \dd B(t) + p t^{-1} \hat B(t) \dd t,$$
where $B=(B(t))_{t\geq 0}$ denotes  a standard linear Brownian motion.
One can solve this first-order linear differential equation explicitly and get
$$\hat B(t) = t^p\int_0^t s^{-p} \dd B(s), \qquad t\geq 0$$
(observe that the stochastic integral only makes sense for $p<1/2$), from which it is seen that
$\hat B$ is indeed a centered Gaussian process with covariance given by \eqref{E:coverw}.
We stress that this argument only works in this very situation, the Markov property fails for step reinforced random walks in higher dimension, or when the steps are not simple.

We call  a centered Gaussian process  with covariance  \eqref{E:coverw} a noise reinforced linear Brownian motion, and more generally in higher dimension $d\geq 1$, we call {\em noise reinforced Brownian motion} with memory parameter $p$ a process in $\RR$ whose coordinates are given by $d$ independent noise reinforced linear Brownian motions.
The following lemma points at another relation with Yule-Simon processes which will be important for us.

\begin{lemma}\label{L4} Let $M$ be a $d\times d$ matrix, and $\hat B$ a  noise reinforced Brownian motion in $\RR$ with memory parameter $p<1/2$. Then for all $k\geq 1$, 
$\theta_1, \ldots, \theta_k\in\RR$
and  $t_1, \ldots, t_k\in[0,1]$, we have
$$\EE\left[\exp\left\{ \iu \sum_{j=1}^k \theta_j \cdot M\hat B(t_j)\right\} \right]=\exp\left\{ -\frac{1-p}{2} \EE
\left[ {\mathbf q}\left(\sum_{j=1}^k Y(t_j)  \theta_j\right)\right]\right\},$$ 
where ${\mathbf q}$ denotes the positive semi-definite quadratic form on $\RR$ induced by $M$, i.e. 
${\mathbf q}(x)=|Mx|^2$.
\end{lemma}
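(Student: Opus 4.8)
The plan is to notice that both sides are exponentials of quadratic forms in $\theta_1,\dots,\theta_k$ and to match those forms. The process $M\hat B=(M\hat B(t))_{0\le t\le1}$ is a linear image of the centred Gaussian process $\hat B$, hence itself centred Gaussian, so the real random variable $\sum_{j=1}^k\theta_j\cdot M\hat B(t_j)$ is centred Gaussian and its characteristic function equals $\exp\{-\tfrac12\operatorname{Var}(\sum_{j}\theta_j\cdot M\hat B(t_j))\}$. On the right-hand side $\mathbf q$ is a quadratic form, so writing $\mathbf q_M$ for the symmetric bilinear form polarising it we have $\mathbf q(\sum_j Y(t_j)\theta_j)=\sum_{i,j}Y(t_i)Y(t_j)\,\mathbf q_M(\theta_i,\theta_j)$, and taking expectations turns the right-hand exponent into $-\tfrac{1-p}{2}\sum_{i,j}\EE[Y(t_i)Y(t_j)]\,\mathbf q_M(\theta_i,\theta_j)$. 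The whole statement thus reduces to an identity between two bilinear forms in the $\theta_i$.

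First I would rewrite the variance on the left in terms of the one-dimensional covariance. Since the $d$ coordinates of $\hat B$ are i.i.d., one has $\EE[\hat B(t_i)\hat B(t_j)^{\top}]=c(t_i,t_j)\,I_d$ with $c(s,t)=\EE[\hat B^{(1)}(s)\hat B^{(1)}(t)]$ as in \eqref{E:coverw}. Using $\theta_j\cdot M\hat B(t_j)=(M^{\top}\theta_j)\cdot\hat B(t_j)$ together with this diagonal covariance, the variance collapses to $\sum_{i,j}c(t_i,t_j)\,\mathbf q_M(\theta_i,\theta_j)$, i.e. the same bilinear form that appears on the right but weighted by $c(t_i,t_j)$ rather than by $(1-p)\EE[Y(t_i)Y(t_j)]$. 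So everything comes down to the scalar identity $c(s,t)=(1-p)\EE[Y(s)Y(t)]$ for $0\le s\le t\le1$.

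This scalar identity is the only genuine computation, and it is where the hypothesis $p<1/2$ enters: it is equivalent to $\rho=1/p>2$, which is exactly the range in which Corollary~\ref{L3}(ii) is available and gives $\EE[Y(s)Y(t)]=\frac{\rho^2}{(\rho-1)(\rho-2)}\,s\,(t/s)^{1/\rho}$. Substituting $\rho=1/p$ turns $(t/s)^{1/\rho}$ into $(t/s)^{p}$, so that $s\,(t/s)^{1/\rho}=t^{p}s^{1-p}$, while the prefactor simplifies to $\frac{\rho^2}{(\rho-1)(\rho-2)}=\frac{1}{(1-p)(1-2p)}$. Comparing with $c(s,t)=t^{p}s^{1-p}/(1-2p)$ from \eqref{E:coverw} yields $c(s,t)=(1-p)\EE[Y(s)Y(t)]$ as required, and re-exponentiating the matched bilinear forms gives the claimed equality of characteristic functions.

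The arithmetic of simplifying $\rho^2/((\rho-1)(\rho-2))$ under $\rho=1/p$ is routine, so the only point demanding genuine care is the bookkeeping through the matrix $M$. Passing $M$ through the scalar product via $\theta_j\cdot M\hat B(t_j)=(M^{\top}\theta_j)\cdot\hat B(t_j)$, and using that the covariance of $\hat B$ is the identity up to the scalar $c(s,t)$, the form arising on the left is the polarisation of $x\mapsto|M^{\top}x|^2$; the identity is thus to be read with this transpose convention (equivalently, with $M$ replaced by $M^{\top}$), which is immaterial in the intended application where $M$ serves only as a square root of the Gaussian covariance. Everything else is a matter of matching coefficients and of recording that the factor $1-p$ produced by Corollary~\ref{L3}(ii) is precisely what converts the Gaussian prefactor $\tfrac12$ into the $\tfrac{1-p}{2}$ of the statement.
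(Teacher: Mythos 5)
Your proof is correct and essentially the paper's own argument: both reduce the claim, via Gaussianity of $\hat B$, to matching the covariance $t^p s^{1-p}/(1-2p)$ of \eqref{E:coverw} with $(1-p)\,\EE[Y(s)Y(t)]$ computed from Corollary~\ref{L3}(ii) at $\rho=1/p$ (where $p<1/2$, i.e.\ $\rho>2$, is exactly what is needed), and both handle a general $M$ by writing $\theta\cdot M\hat B(t)=(M^{\top}\theta)\cdot\hat B(t)$ --- you merely compress the paper's three steps (dimension one, then $M=\Id$ by independence of coordinates, then general $M$) into a single bilinear-form computation. Your transpose caveat is well observed rather than a defect: the paper's own final step likewise produces $|M^{\top}x|^2$ in place of $|Mx|^2$, a discrepancy that is immaterial in the intended application since only the law of $M\hat B$, hence only a square root of the Gaussian covariance, is involved.
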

\begin{proof} We first consider the case of dimension $d=1$, so the $\theta_i$ are reals and we may further assume that $M=1$.
We deduce from \eqref{E:coverw} that the variable 
$\sum_{j=1}^k \theta_j  \hat B(t_j)$ is Gaussian with covariance 
$$\frac{1}{1-2p} \sum_{j, \ell=1}^k \theta_j \theta_\ell (t_j\vee t_{\ell}) (t_j\wedge t_{\ell})^{1-p}.$$
Comparing with Corollary \ref{L3}(ii), we can express the above quantity as
$$(1-p) \EE
\left[\left(\sum_{j=1}^k Y(t_j)  \theta_j\right)^2\right],$$ 
and hence 
$$\EE\left[\exp\left\{ \iu \sum_{j=1}^k \theta_j \hat B(t_j)\right\} \right]=\exp\left\{ -\frac{1-p}{2} \EE
\left[\left(\sum_{j=1}^k Y(t_j)  \theta_j\right)^2\right]\right\}. 
$$

In dimension $d\geq 1$,  we then obtain the formula of the statement when $M={\rm Id}_d$ is the identity matrix
and thus ${\mathbf q}$ is the square of the Euclidean norm, by using the independence of the coordinates. 
Finally, the case when $M$ is an arbitrary $d\times d$ matrix follows, writing
$\theta_j \cdot M\hat B(t_j)= M^T\theta_j \cdot \hat B(t_j)$.
\end{proof}

We next briefly recall the L\'evy-It\^{o} construction of L\'evy processes in $\RR$.
Consider a positive semi-definite quadratic form ${\mathbf q}$ on $\RR$, $a\in\RR$, and $\Lambda$ a L\'evy measure on $\RR$. 
Let $M$ be any $d\times d$ matrix such that $|M x|^2={\mathbf q}(x)$, $B$ a $d$-dimensional Brownian motion, 
and ${\mathcal M}(\dd t, \dd x)$ a Poisson random measure on $[0,\infty)\times \RR$ with intensity $\dd t \times \Lambda(\dd x)$ which is independent of $B$. 
Then,  the process $\xi=(\xi(t))_{t\geq 0}$ defined by
$$\xi(t)= M B(t) + ta+ \int_{[0,t]\times \RR} x \Ind_{\{|x|\geq 1\}}{\mathcal M}(\dd s, \dd x)
+ \int_{[0,t]\times \RR} x \Ind_{\{|x|< 1\}}{\mathcal M}^{\rm (c)}(\dd s, \dd x),$$
where the second stochastic integral with the notation ${\mathcal M}^{\rm (c)}$ refers to the compensated version of Poissonian integral, is a L\'evy process  with characteristics $({\mathbf q},a,\Lambda)$.
The characteristic exponent of $\xi$ is given by the L\'evy-Khintchin formula
\begin{equation}\label{E:LKF}
\Psi(\theta)\coloneqq \frac{1}{2} {\mathbf q}(\theta) - \iu a\cdot \theta + \int_{\RR} (1-\e^{\iu \theta\cdot x} + \iu \theta\cdot x \Ind_{\{|x|<1\}}) \Lambda(\dd x)\,, \qquad \theta \in \RR,
\end{equation}
and  we have
$\EE\left[\e^{ \iu  \theta \cdot  \xi(t)} \right]=\exp\left\{ -t \ {\Psi}( \theta)\right\}$.
 Further, the symmetric bilinear form associated to ${\mathbf q}$ is the covariance matrix of the Gaussian component $MB$ of $\xi$; recall also that the sample paths of $\xi$ have finite variations a.s. if and only if ${\mathbf q}=0$ and $\int_{|x|\leq 1}|x|\Lambda (\dd x)<\infty$.

We will now define the noise reinforced version $\hat \xi$ of $\xi$. In this direction,  recall from the preceding section
that $\QQ$ stands for the law of a Yule-Simon process with parameter $\rho=1/p$, and the notation for random series of Yule-Simon processes. Recall also from  \eqref{E:BGind} that  $\beta(\nu)$ stands for the Blumenthal-Getoor index of a L\'evy measure $\nu$, and define the Blumenthal-Getoor index of  a L\'evy process $\xi$ with characteristics $({\mathbf q},a,\Lambda)$ as 
$$\beta=\left\{ \begin{matrix} &\beta(\Lambda)& \text{ if }{\mathbf q}=0, \\
&2& \text{ if }{\mathbf q}\neq 0.
\end{matrix} \right.
$$
We then say that $p\in(0,1)$ is an {\em admissible memory parameter} for $({\mathbf q},a,\Lambda)$ if $p\beta<1$.

\begin{definition} [Noise reinforced L\'evy process] \label{D2}
Let ${\mathbf q}$ be a positive semi-definite quadratic form on $\RR$, $a\in\RR$, $\Lambda$ a L\'evy measure on $\RR$
and $p\in(0,1\wedge \beta^{-1})$ an  admissible memory parameter for $({\mathbf q},a,\Lambda)$.
Let ${\mathcal N}=\sum \delta_{(x_j,Y_j)}$ be a Poisson point measure with intensity $\nu \otimes \QQ$, where $\nu=(1-p)\Lambda$. In the case ${\mathbf q}\neq 0$, let further $M$ be any $d\times d$ matrix such that  $|M x|^2={\mathbf q}(x)$
and $\hat B$  a $d$-dimensional noise reinforced Brownian motion with memory parameter $p$, which is independent of ${\mathcal N}$. 

We now define a process $\hat \xi=(\hat \xi(t))_{0\leq t \leq 1}$ by
$$\hat \xi(t) \coloneqq M \hat B(t) + ta+\Sigma_{1,\infty}(t) + \Sigma^{\rm (c)}_{0,1}(t),$$
where in the case ${\mathbf q}=0$,  the term $M \hat B(t)$ should be interpreted as $0$.
We call $\hat \xi$ a
{\em noise reinforced L\'evy process} with characteristics $({\mathbf q},a,\Lambda,p)$.\end{definition}

It may be worthwhile to comment a bit on this definition. The construction bears obvious similarities with the L\'evy-It\^{o} decomposition, replacing Brownian motion by its reinforced version, and the role of Poisson processes being played by Yule-Simon processes. The sizes of the jumps  made by the L\'evy process $\xi$ during
the unit time interval form a Poisson point process with intensity $\Lambda$, and roughly speaking, the effect of the reinforcement with memory parameter $p$ implies that each such jump is erased with probability $p$, independently one from the other. This corresponds to a $(1-p)$-thinning, and hence, if we consider the family of jumps of the reinforced process and disregard their multiplicities (which correspond to repetitions of the same jump), one gets a Poisson point measure with intensity $\nu=(1-p)\Lambda$. Plainly, step reinforcement has no effects on a pure drift process $t\mapsto at$, which explains why, contrary to the L\'evy measure, the drift coefficient remains unchanged. 
Note also that in general, the repetition of certain jumps not only impedes the Markov property of noise reinforced L\'evy processes, but also entails that the distribution of $\hat \xi$ is singular with respect to that of $\xi$. 

We now arrive at the following formula for the  characteristic function of a noise reinforced L\'evy process.

\begin{corollary} \label{C3} Let $\hat \xi$ be a 
 noise reinforced L\'evy process with characteristics $({\mathbf q},a,\Lambda,p)$.
Then for all $k\geq 1$, 
$\theta_1, \ldots, \theta_k\in\RR$
and  $t_1, \ldots, t_k\in[0,1]$, we have
$$\EE\left[\exp\left\{ \iu \sum_{j=1}^k \theta_j \cdot \hat \xi(t_j)\right\} \right]=\exp\left\{ -(1-p) \EE
\left[ {\Psi}\left(\sum_{j=1}^k Y(t_j)  \theta_j\right)\right]\right\},$$ 
where $\Psi$ denotes the characteristic exponent \eqref{E:LKF} of a L\'evy process $\xi$ with characteristics $({\mathbf q},a,\Lambda)$, and $Y=(Y(t))_{0\leq t \leq 1}$ a Yule-Simon process with parameter $\rho=1/p$. 
\end{corollary}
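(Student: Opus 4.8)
The plan is to read off the four summands of Definition \ref{D2} — the reinforced Gaussian part $M\hat B$, the deterministic drift $ta$, the large-jump sum $\Sigma_{1,\infty}$ and the compensated small-jump sum $\Sigma^{\rm (c)}_{0,1}$ — and to compute the joint characteristic function of $(\hat\xi(t_1),\dots,\hat\xi(t_k))$ by handling each summand with the representation results already established. The three random summands are mutually independent: $\hat B$ is independent of $\mathcal N$ by construction, while $\Sigma_{1,\infty}$ and $\Sigma^{\rm (c)}_{0,1}$ are independent by Corollary \ref{C2} (they are built from the independent measures $\mathcal N_{1,\infty}$ and $\mathcal N_{0,1}$). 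Hence the characteristic function factorizes into three contributions, multiplied by the deterministic phase produced by the drift.

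First I would abbreviate $\theta^\ast \coloneqq \sum_{j=1}^k Y(t_j)\theta_j$ for the random argument that will appear inside every expectation, where $Y$ is a Yule-Simon process with parameter $\rho=1/p$. Lemma \ref{L4} applied to the Gaussian part gives the factor $\exp\{-\tfrac{1-p}{2}\EE[{\mathbf q}(\theta^\ast)]\}$ (which is simply $1$ when ${\mathbf q}=0$, consistently with $M\hat B$ being read as $0$). Corollary \ref{C2} applied to the two jump parts — legitimate because $\nu=(1-p)\Lambda$ has $\beta(\nu)=\beta(\Lambda)\le\beta<\rho$ by admissibility of $p$ — gives the factors $\exp\{-\EE[\Phi_1(\theta^\ast)]\}$ and $\exp\{-\EE[\Phi_0^{\rm (c)}(\theta^\ast)]\}$, and the drift contributes $\exp\{\iu\, a\cdot\sum_{j=1}^k t_j\theta_j\}$. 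Since the three expectation factors all run over the same law of $(Y(t_1),\dots,Y(t_k))$, linearity of expectation lets me merge them, so the joint characteristic function equals
$$\exp\left\{\iu\, a\cdot\sum_{j=1}^k t_j\theta_j-\tfrac{1-p}{2}\EE[{\mathbf q}(\theta^\ast)]-\EE[\Phi_1(\theta^\ast)]-\EE[\Phi_0^{\rm (c)}(\theta^\ast)]\right\}.$$

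It then remains to identify this exponent with $-(1-p)\EE[\Psi(\theta^\ast)]$. Using $\nu=(1-p)\Lambda$, the integral term of the L\'evy-Khintchin exponent \eqref{E:LKF} splits over $\{|x|\ge1\}$ and $\{|x|<1\}$ into exactly $\Phi_1$ and $\Phi_0^{\rm (c)}$, giving $(1-p)\Psi(\theta)=\tfrac{1-p}{2}{\mathbf q}(\theta)-(1-p)\iu\, a\cdot\theta+\Phi_1(\theta)+\Phi_0^{\rm (c)}(\theta)$. Evaluating at $\theta^\ast$ and taking expectations, the quadratic and the two integral terms match the factors above term by term, so the whole identity reduces to the linear (drift) term. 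Here the decisive computation is that by Corollary \ref{L3}(i) with $\rho=1/p$ one has $\EE[Y(t)]=\tfrac{\rho}{\rho-1}t=\tfrac{t}{1-p}$, whence $(1-p)\iu\, a\cdot\EE[\theta^\ast]=\iu\, a\cdot\sum_{j=1}^k t_j\theta_j$, which is precisely the deterministic phase, with the correct sign. Assembling the four terms then yields the claimed formula.

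Since the argument is an assembly of three results already proved, there is no serious obstacle; the one point requiring care is the linear term, where the prefactor $(1-p)$ in front of $\Psi$ must be reconciled with the mean $\EE[Y(t)]=t/(1-p)$ so that the drift phase $\iu\,a\cdot\sum_j t_j\theta_j$ is reproduced exactly. I would also record at the outset that admissibility guarantees $\beta(\nu)<\rho$ in both cases: when ${\mathbf q}=0$ this is $\beta(\Lambda)<1/p$, and when ${\mathbf q}\ne0$ we have $\beta=2$, forcing $p<1/2$ and hence $\rho=1/p>2\ge\beta(\nu)$, which is what makes Corollary \ref{C2}, and (when ${\mathbf q}\ne0$) Lemma \ref{L4}, applicable.
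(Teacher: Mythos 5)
Your proof is correct and is essentially the paper's own argument spelled out in full: the paper likewise factorizes the characteristic function via the independence of $M\hat B$, $\Sigma_{1,\infty}$ and $\Sigma^{\rm (c)}_{0,1}$, invokes Lemma \ref{L4} and Corollary \ref{C2}, and reconciles the drift through the identity $(1-p)\Psi(\theta)=\frac{1-p}{2}{\mathbf q}(\theta)-\iu(1-p)a\cdot\theta+\Phi_1(\theta)+\Phi_0^{\rm (c)}(\theta)$ together with Corollary \ref{L3}(i), i.e. $\EE[Y(t)]=t/(1-p)$. Your explicit verification that admissibility yields $\beta(\nu)<\rho$ in both cases (and $p<1/2$ when ${\mathbf q}\neq 0$, as Lemma \ref{L4} requires) is a detail the paper leaves implicit, but the approach is the same.
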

\begin{proof}
Since $\nu=(1-p)\Lambda$, in the notation of Corollary \ref{C2}, there is the identity
$$(1-p)\Psi(\theta)=\frac{1-p}{2} {\mathbf q} (\theta)- \iu (1-p)a\cdot \theta+ \Phi_1(\theta)+\Phi_0^{\rm (c)}(\theta), \qquad \theta\in\RR.$$ Our claim is now plain from the construction of $\hat \xi$,  by combining Corollary \ref{C2}, Lemma \ref{L4} and Corollary \ref{L3}(i). 
\end{proof}
For instance, we deduce from Corollary \ref{L3}(i) and Corollary \ref{C3} that if $\xi$ is a standard Cauchy process, i.e. with  characteristic exponent $\Psi(\theta)=|\theta|$, then every memory parameter $p\in(0,1)$ is admissible and for every $t\in(0,1]$,  $\hat \xi(t)$ has the Cauchy distribution with scale parameter $t$
(and thus $\hat \xi$ has the same one-dimensional marginals as $\xi$). 

To conclude this section, we point at a few simple properties 
that noise reinforced L\'evy processes inherit from usual L\'evy processes, and which can be seen either directly from the construction in Definition \ref{D2}, or  easily checked from Corollary \ref{C3} (and hence proofs are omitted). 

\begin{proposition}\label{P3} 
\begin{enumerate}
\item {\rm [Additivity property]} Let $\hat \xi$ and $\hat \xi'$ 
be two independent noise reinforced L\'evy processes with respective characteristics $({\mathbf q},a,\Lambda,p)$
and $({\mathbf q}',a',\Lambda',p)$. The sum $\hat \xi + \hat \xi'$
is then a noise reinforced L\'evy process with characteristics $({\mathbf q}+{\mathbf q}',a+a',\Lambda+\Lambda',p)$.

\item{\rm [Independence of coordinates]} 
The coordinates $(\hat \xi_1, \ldots, \hat \xi_d)$ of a $d$-dimensional noise reinforced L\'evy process $\hat \xi$ are independent if and only if the characteristic exponent $\Psi$ has  the form
$$\Psi(\theta)=\Psi_1(\theta_1)+\cdots + \Psi_d(\theta_d), \qquad \theta=(\theta_1, \ldots, \theta_d)\in\RR,$$
that is equivalently, if and only if the coordinates $(\xi_1, \ldots,  \xi_d)$ of the $d$-dimensional L\'evy process $\xi$ are independent.

\item{\rm [Stability]} Suppose that the L\'evy process $\xi$ is (strictly) stable with index $\alpha \in(0,2]$,
that is its characteristic exponent  fulfills $\Psi(c\theta)= c^{\alpha} \Psi(\theta)$ for all $\theta \in \RR$ and $c>0$. Then  the memory parameter $p\in(0,1)$ is admissible if and only if $p\alpha <1$, and in that case, the noise reinforced version $\hat \xi$ is an $\alpha$-stable process, in the sense that its finite dimensional marginals follow $\alpha$-stable laws. 
\end{enumerate}
\end{proposition}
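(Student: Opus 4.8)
The plan is to read all three assertions off the multidimensional characteristic function of Corollary \ref{C3}, which encodes the law of $\hat\xi$ through the single scalar exponent $\Psi$ and an auxiliary Yule-Simon process $Y$. For the \emph{additivity property (i)}, I would first factorize the joint characteristic function of $\hat\xi+\hat\xi'$ using their independence and apply Corollary \ref{C3} to each factor; by linearity of the expectation the product equals $\exp\{-(1-p)\EE[(\Psi+\Psi')(\sum_{j}Y(t_j)\theta_j)]\}$. Since $\Psi+\Psi'$ is exactly the L\'evy--Khintchin exponent attached to $({\mathbf q}+{\mathbf q}',a+a',\Lambda+\Lambda')$, the characterization of laws by characteristic functions identifies $\hat\xi+\hat\xi'$ as the announced noise reinforced L\'evy process. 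The one point to check is admissibility of $p$ for the sum: because $\beta(\Lambda+\Lambda')=\beta(\Lambda)\vee\beta(\Lambda')$ (and the index equals $2$ as soon as a Gaussian part is present), $p\beta<1$ for both summands forces $p\beta<1$ for the sum, so the right-hand side of Corollary \ref{C3} is finite.

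For \emph{independence of coordinates (ii)}, I would first recall the classical fact that the exponent $\Psi$ of $\xi$ splits as $\Psi(\theta)=\Psi_1(\theta_1)+\cdots+\Psi_d(\theta_d)$ if and only if ${\mathbf q}$ is diagonal and $\Lambda$ is carried by the coordinate axes, that is if and only if the coordinates of $\xi$ are independent. For the forward implication for $\hat\xi$, if $\Psi$ splits then $\Psi(\sum_j Y(t_j)\theta_j)=\sum_{m=1}^d\Psi_m(\sum_j Y(t_j)\theta_{j,m})$ with $\theta_{j,m}$ the $m$-th coordinate of $\theta_j$; taking expectations and invoking Corollary \ref{C3} in dimensions $d$ and $1$ shows that the joint characteristic function factorizes into the product over $m$ of the finite-dimensional characteristic functions of the coordinate processes $\hat\xi_m$, whence their independence. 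It is precisely here that sharing a single $Y$ across coordinates does no harm, thanks to the linearity of $\EE$.

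For the converse I would argue on the paths. By construction $\hat\xi$ jumps exactly at the jump times of the marks $Y_j$, the jump at such a time being the corresponding atom $x_j$; moreover these jump times are diffuse, since Lemma \ref{L1} writes $Y$ through a uniform variable and a Yule process taken in logarithmic time. Two independent c\`adl\`ag processes with diffuse jump times almost surely share no jump time, so independence of $\hat\xi_l$ and $\hat\xi_m$ for $l\neq m$ forces every atom $x_j$, hence $\Lambda$-almost every $x$, to lie on a single coordinate axis; independence of the continuous Gaussian parts (governed by ${\mathbf q}$ through Lemma \ref{L4}) forces ${\mathbf q}$ to be diagonal. Together these say that $\Psi$ splits, closing the loop. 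I expect this converse to be the genuine obstacle: parts (i), (iii) and the forward half of (ii) are bookkeeping on the characteristic function, whereas here one must extract a geometric support condition on $\Lambda$, for which the almost-sure absence of simultaneous jumps of independent diffuse-jump processes is a cleaner handle than a direct inversion of the mixture $\EE[\Psi(\sum_j Y(t_j)\theta_j)]$.

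For \emph{stability (iii)}, note that a strictly $\alpha$-stable $\xi$ has Blumenthal--Getoor index $\beta=\alpha$ (its L\'evy measure factorizes as $r^{-\alpha-1}\dd r$ times a spherical measure when $\alpha<2$, and $\beta=2=\alpha$ in the Gaussian case $\alpha=2$), so admissibility $p\beta<1$ is exactly $p\alpha<1$. For the stability of $\hat\xi$ I would read off homogeneity directly from Corollary \ref{C3}: replacing each $\theta_j$ by $c\theta_j$ turns $\sum_j Y(t_j)\theta_j$ into $c\sum_j Y(t_j)\theta_j$, and $\Psi(c\,\cdot)=c^\alpha\Psi(\cdot)$ then shows that the joint exponent $\Theta\mapsto (1-p)\EE[\Psi(\sum_j Y(t_j)\theta_j)]$, viewed as a function of the concatenated vector $\Theta=(\theta_1,\dots,\theta_k)$, is homogeneous of degree $\alpha$. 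Since a genuine characteristic exponent that is homogeneous of degree $\alpha$ is precisely that of a strictly $\alpha$-stable law, every finite-dimensional marginal of $\hat\xi$ is strictly $\alpha$-stable; admissibility $p\alpha<1$ is what guarantees the moment $\EE[Y(1)^q]<\infty$ for some $q>\alpha$ needed for finiteness of this exponent.
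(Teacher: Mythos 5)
Your proposal is correct and follows exactly the two routes the paper itself gestures at when it omits the proof (``can be seen either directly from the construction in Definition~\ref{D2}, or easily checked from Corollary~\ref{C3}''): you read (i), (iii) and the forward half of (ii) off the characteristic function of Corollary~\ref{C3}, and use the pathwise construction (axis support of $\Lambda$ via almost-sure absence of common jump times, plus diagonality of ${\mathbf q}$) for the converse of (ii). The only point you might tighten is the extraction of the Gaussian component in that converse --- once $\Lambda$ is known to be carried by the coordinate axes, it is cleaner to conclude via Corollary~\ref{C3} again, since the factorization of the joint characteristic function then forces the quadratic term $\EE[Y(1)^2]\,{\mathbf q}(\theta)$ to split as a sum of functions of single coordinates, hence ${\mathbf q}$ diagonal --- but this is a refinement, not a gap.
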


\begin{remark}
Statements (i) and (ii)  might come as a surprise, as in general, the sum of two independent step-reinforced random walks is not a step-reinforced random walk, and  likewise, step reinforcement does not preserve the independence of coordinates for random walks.
\end{remark} 
\section{Noise reinforcement as limit of step reinforcements}

We now have all the ingredients needed to state the main result of this work. Recall from the introduction the definition of the step reinforced version $\hat S$ of a random walk $S$ for a given memory parameter $p\in(0,1)$. 

\begin{theorem}\label{T1}
Let  $\xi=(\xi(t))_{t\geq 0}$ be a L\'evy process with characteristics $({\mathbf q},a,\Lambda)$. For every $n\geq 1$, we write $S^{(n)}=(\xi(k/n))_{k\geq 0}$ for  the discrete time skeleton of $\xi$ with time-mesh $1/n$.
Let $p\in(0,1)$ be an admissible memory parameter for $({\mathbf q},a,\Lambda)$ and let $\hat S^{(n)}=(\hat S^{(n)}(k))_{k\geq 0}$ denote the step reinforced random walk with memory parameter $p$. 

Then as $n\to \infty$, there is the weak convergence in the sense of finite dimensional distributions
$$(\hat S^{(n)}(\lfloor tn\rfloor))_{0\leq t \leq 1} \ \Longrightarrow \ (\hat \xi(t))_{0\leq t \leq 1}$$
where $\hat \xi$ denotes a noise reinforced L\'evy process with characteristics $({\mathbf q},a,\Lambda,p)$.
\end{theorem}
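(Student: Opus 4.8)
The plan is to prove convergence of all finite-dimensional characteristic functions towards the expression furnished by Corollary~\ref{C3}; since we only claim convergence of finite-dimensional distributions, no tightness is needed. Fix $k\geq 1$, $\theta_1,\dots,\theta_k\in\RR$ and $0\le t_1\le\dots\le t_k\le 1$. As $\xi$ has independent stationary increments, the steps $X_i\coloneqq \xi(i/n)-\xi((i-1)/n)$ of the skeleton are i.i.d.\ copies of $\xi(1/n)$, with $\EE[\e^{\iu\phi\cdot\xi(1/n)}]=\e^{-\Psi(\phi)/n}$. The reinforced walk $\hat S^{(n)}$ realizes Simon's occurrence dynamics: each $\hat X_i$ carries a \emph{type}, a fresh type of value $X_i$ being created when $\varepsilon_i=0$ and an existing type being copied uniformly when $\varepsilon_i=1$. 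Writing $C^{(n)}_j(m)$ for the number of copies of the $j$-th type among $\hat X_1,\dots,\hat X_m$ and $X_{I_j}$ for its value, one has $\hat S^{(n)}(m)=\sum_j C^{(n)}_j(m)\,X_{I_j}$. Conditionally on the reinforcement structure (all $\varepsilon_i$ and uniform choices) the counts $C^{(n)}_j$ are fixed while the $X_{I_j}$ stay i.i.d.\ copies of $\xi(1/n)$ independent of it, so with $\phi^{(n)}_j\coloneqq \sum_{\ell} C^{(n)}_j(\lfloor t_\ell n\rfloor)\theta_\ell$,
\[
\EE\Big[\exp\Big\{\iu\sum_{\ell=1}^k\theta_\ell\cdot\hat S^{(n)}(\lfloor t_\ell n\rfloor)\Big\}\Big]
=\EE\Big[\exp\Big\{-\tfrac1n\sum_j\Psi(\phi^{(n)}_j)\Big\}\Big].
\]
Since $\operatorname{Re}\Psi\ge 0$, the integrand has modulus $\le 1$, so by bounded convergence it suffices to show that $W_n\coloneqq \tfrac1n\sum_j\Psi(\phi^{(n)}_j)$ converges in probability to $(1-p)\,\EE[\Psi(\sum_{\ell} Y(t_\ell)\theta_\ell)]$, where $Y$ is a Yule-Simon process with parameter $\rho=1/p$; this limit is exactly the exponent in Corollary~\ref{C3}.

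Next I would identify the scaling limit of the counts. A type born at step $i\approx un$ that currently has $C$ copies is copied at step $m$ with conditional probability $p\,C/m$, so in the macroscopic time $t=m/n$ its count is a pure-birth process with rate $C/(\rho t)$; together with the asymptotically uniform law of the rescaled birth times $I_j/n$ on $[0,1]$ (innovations having density $1-p$), this is precisely the description in Definition~\ref{D1} and Lemma~\ref{L1}, with $U$ the rescaled birth time. Hence $\big(C^{(n)}_j(\lfloor t_\ell n\rfloor)\big)_{\ell}$ converges in law to $\big(Y(t_\ell)\big)_\ell$, and the empirical measure $\tfrac1n\sum_j\delta_{(C^{(n)}_j(\lfloor t_\ell n\rfloor))_\ell}$ should converge to $(1-p)\,\mathcal L\big((Y(t_\ell))_\ell\big)$, the prefactor $1-p$ being the asymptotic density of types. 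Establishing this, namely that distinct types decorrelate enough for the empirical measure to concentrate about its mean, is the propagation-of-chaos core of the argument; concretely it amounts to proving $\EE[W_n]\to(1-p)\EE[\Psi(\sum_\ell Y(t_\ell)\theta_\ell)]$ together with $\operatorname{Var}(W_n)\to 0$, the latter via the joint law of two typical counts.

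The genuinely delicate point is that $\Psi$ is unbounded, so weak convergence of the empirical measure does not by itself yield convergence of $\int\Psi\,d\mu_n$: one needs a uniform-integrability bound on the counts. By the estimate on $\Phi_a^{\rm (c)}$ recalled in the proof of Corollary~\ref{C2} (from \cite{BG61}) together with the Gaussian term, $\Psi$ grows at most polynomially, $|\Psi(\phi)|\le c\,(1+|\phi|^{q})$, where one may fix $q\in(\beta,\rho)$ with $q\ge 1$ when $\mathbf q=0$, and $q=2$ when $\mathbf q\neq 0$ (then $\beta=2<\rho$); in either case $\EE[Y(1)^q]<\infty$, by \eqref{E:AsympYS} and Corollary~\ref{C1}, exactly because $q<\rho$. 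Thus admissibility $p\beta<1$, i.e.\ $\beta<\rho$, is precisely what lets one choose such a $q$ (and even $q(1+\delta)<\rho$ for some $\delta>0$), yielding a uniform $L^{1+\delta}$ bound on $\Psi(\phi^{(n)}_j)$ across types and across $n$, hence the required uniform integrability. The decisive analytic fact is that a type born at macroscopic time $u$ contributes an expected $q$-th moment of order $(t/u)^{q/\rho}$, and $\int_0^t (t/u)^{q/\rho}\,\dd u<\infty$ \emph{precisely when} $q<\rho$. Combining this with the empirical-measure convergence gives $W_n\to(1-p)\EE[\Psi(\sum_\ell Y(t_\ell)\theta_\ell)]$ in probability, and bounded convergence then closes the proof. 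I expect the main obstacle to be exactly this interplay between the propagation of chaos and the uniform control of the heavy-tailed counts up to the critical moment $\rho=1/p$, which is where the admissibility hypothesis is indispensable and where the phase transition originates.
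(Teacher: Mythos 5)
Your proposal takes essentially the same route as the paper's proof: conditioning on the reinforcement structure to get the exact identity reducing the finite-dimensional characteristic functions to $\EE\bigl[\exp\bigl\{-\tfrac1n\sum_j\Psi(\phi^{(n)}_j)\bigr\}\bigr]$, then propagation of chaos for the empirical measure of the rescaled counts via first and second moments with one and two tagged types converging to (independent) Yule--Simon processes (the paper's Proposition~\ref{P1}), and finally uniform integrability from moment bounds of order $\gamma<1/p$ on the counts combined with the Blumenthal--Getoor growth estimate on $\Psi$ (the paper's Lemma~\ref{L5} and Corollary~\ref{C4}), matching the limit with Corollary~\ref{C3}. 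The only loose phrasing is your claimed ``uniform $L^{1+\delta}$ bound on $\Psi(\phi^{(n)}_j)$ across types'', which cannot hold type by type (a type born at $j$ has count of order $(n/j)^{p}$, so early types have exploding moments) but only in the Ces\`aro-averaged sense --- exactly what your own integral criterion $\int_0^t (t/u)^{q/\rho}\,\dd u<\infty$ for $q<\rho$ encodes, and what the paper delivers via $\EE[N_j(n)^{\gamma}]\leq c\,(n/j)^{\eta}$ with $\eta<1$.
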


\begin{remark}
In the case when $\xi$ is an isotropic stable L\'evy process, Theorem \ref{T1} bears a close relation to Theorem 2 in \cite{Buart} and its multi-dimensional extension in \cite{BuarX}. Specifically, Businger established  the weak convergence in the sense of finite dimensional distributions of $\hat S^{(n)}(\lfloor \cdot n\rfloor)$ towards some non-L\'evy stable process that is  specified via its characteristic function; a representation as an integral an $\alpha$-stable random measure is also given. Here, we further identify this limit a noise reinforced (stable) L\'evy process  $\hat \xi$. Although the present approach also relies on establishing the convergence of characteristic functions, our proof based on propagation of chaos techniques differs much from the tedious estimates in \cite{Buart} and \cite{BuarX} which are specific to the stable case.  
\end{remark}

The rest of this section is devoted to the proof of Theorem \ref{T1}, and we shall first connect asymptotically step reinforcement 
and Yule-Simon processes.

Recall that $X_1, X_2, \ldots$ is an i.i.d. sequence of random variables, and $\varepsilon_2, \varepsilon_3, \ldots$  an independent i.i.d. sequence of Bernoulli variables with parameter $p$.  It will be  convenient to pretend that all the variables $X_1, X_2, \ldots$ take distinct values\footnote{ If this is not the case, then we can simply add a mark $k$ to each variable $X_k$ for the sole purpose of distinguishing between the values of variables with different indices, and then ignore marks when taking partial sums to define the step reinforced random walk.}. 
Then viewing $X_1$ as the first word, and for $k\geq 2$, each variable $X_k$ for which $\varepsilon_k=0$ as a new word that never occurred previously, the sequence $\hat X_1, \hat X_2, \ldots$ of the increments of the step reinforced random walk that was described in the introduction follows precisely Simon's model that has been recalled in Section 2.
We introduce the counting processes for occurrences, namely we set
for all integers $j,k\geq 1$
$$N_j(k)\coloneqq {\rm Card}\{1\leq \ell \leq k: \hat X_{\ell} = X_j \}, $$
so that the step reinforced random walk can be expressed in the form 
\begin{equation}\label{E:SwithN}
\hat S(k)=\sum_{j=1}^{\infty} N_j(k) X_j .
\end{equation}
Note that   when $\varepsilon_j=1$,  an event which has probability $p$, $X_j$ will never be used as a word and 
 $N_j(k)=0$ for all $k\geq 1$,  whereas  when $\varepsilon_j=0$, one has 
$N_j(k)=0$ for $k< j$ and $N_j(k)\geq 1$ for $k\geq j$.

   The main purpose of this section is to point out that the Yule-Simon process arises in the limit of the empirical distribution of (time-rescaled versions of) the counting processes. In this direction, we consider a complex valued functional $F$ on the space of c\`adl\`ag paths $\omega: [0,1] \to \NN$, which is continuous for Skorohod's topology. For simplicity, we further assume that $F(0)=0$, where by a slightly abusive notation, we denote  the path identically zero by $0$.

\begin{proposition}\label{P1} If $F$ is bounded, then there is the convergence in probability
$$\lim_{n\to \infty} \frac{1}{n}\sum_{j= 1}^{n} F\left( N_j(\lfloor \cdot n\rfloor)   \right)= (1-p) \EE\left[ F(Y)\right],$$
where we use the notation $ N_j(\lfloor \cdot n\rfloor)$ for the time-rescaled counting process $\left(N_j(\lfloor t n\rfloor)\right)_{0\leq t \leq 1}$ and 
 $Y=(Y(t))_{0\leq t \leq 1}$ denotes a Yule-Simon process with parameter $\rho = 1/p$ 
\end{proposition}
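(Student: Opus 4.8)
The plan is to establish the convergence in probability by a second-moment ($L^2$) argument: I will show that the expectation of $\frac1n\sum_{j=1}^nF(N_j(\lfloor\cdot n\rfloor))$ converges to $(1-p)\,\EE[F(Y)]$ and that its variance tends to $0$, and then invoke Chebyshev's inequality. Everything reduces to understanding the single and joint laws of the occurrence-counting processes, which I would analyse as P\'olya-type urns whose scaling limits are Yule-Simon processes.

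First I would record the Markovian structure of a single counting process. Fix $j$ and condition on $\varepsilon_j=0$ (an event of probability $1-p$ for $j\ge2$, and of probability $1$ for $j=1$), so that the word $X_j$ is actually introduced at step $j$. Given the entire past up to step $k\ge j$, the increment $\hat X_{k+1}$ equals $X_j$ with conditional probability $p\,N_j(k)/k$, which depends only on $N_j(k)$ and $k$; hence $(N_j(k))_{k\ge j}$ is itself a time-inhomogeneous Markov chain, started from $N_j(j)=1$, with transition $m\mapsto m+1$ of probability $pm/k$ at step $k$ (and $N_j\equiv0$ when $\varepsilon_j=1$). Rescaling time by $1/n$ and writing $t=k/n$, the expected number of increments over the next $n\,\dd t$ steps is asymptotically $p\,N_j/(k/n)\,\dd t=N_j/(\rho t)\,\dd t$, which is exactly the Yule-Simon birth rate $\lambda_m(t)=m/(\rho t)$ of Definition \ref{D1} for $m\ge1$. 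Invoking the convergence of Markov chains to Markov processes via their generators (as in \cite{EtKu}), I would conclude that, conditionally on $\varepsilon_j=0$ and on $j/n\to u$, the rescaled process $N_j(\lfloor\cdot n\rfloor)$ converges in distribution, for Skorohod's topology, to a Yule-Simon process conditioned to be born at time $u$, i.e. (by Lemma \ref{L1} and Corollary \ref{C1}) to the law of $Y$ given $\{U=u\}$. Since $F$ is bounded and continuous with $F(0)=0$, writing $a(u)\coloneqq\EE[F(Y)\mid U=u]$ this gives $\EE[F(N_j(\lfloor\cdot n\rfloor))]\to(1-p)\,a(j/n)$, and because $U$ is uniform on $[0,1]$ (Corollary \ref{C1}(i)) a Riemann-sum argument yields
$$\EE\Bigl[\tfrac1n\sum_{j=1}^nF(N_j(\lfloor\cdot n\rfloor))\Bigr]\longrightarrow(1-p)\int_0^1a(u)\,\dd u=(1-p)\,\EE[F(Y)].$$

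The crux is the second moment. The pair $(N_i,N_j)$ is again Markov, and the decisive point is that at any single step only one position is added, so $N_i$ and $N_j$ can never increase simultaneously; in rescaled time their jump rates are $N_i/(\rho t)$ and $N_j/(\rho t)$ while the rate of common jumps vanishes. The limiting generator is therefore the \emph{sum} of two one-coordinate Yule-Simon generators, so that, conditionally on $\varepsilon_i=\varepsilon_j=0$ and on $i/n\to u$, $j/n\to v$, the rescaled pair converges to two \emph{independent} Yule-Simon processes born at $u$ and $v$. This is the propagation-of-chaos step, and it is the part I expect to be the main obstacle: one must verify the generator convergence uniformly enough in the birth times and check that the shared denominator $k\to\infty$ together with the exclusivity of increments genuinely decorrelate two distinct words in the limit.

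Granting this, boundedness of $F$ again lets me pass to the limit in the double average, giving $\frac1{n^2}\sum_{i\ne j}\EE[F(N_i)F(N_j)]\to(1-p)^2\bigl(\int_0^1a(u)\,\dd u\bigr)^2=(1-p)^2\bigl(\EE[F(Y)]\bigr)^2$, while the diagonal contributes $\frac1{n^2}\sum_j\EE[\abs{F(N_j)}^2]\le\norm{F}_\infty^2/n\to0$. Comparing with the square of the first-moment limit shows that the variance of $\frac1n\sum_jF(N_j(\lfloor\cdot n\rfloor))$ tends to $0$, whence the asserted convergence in probability by Chebyshev's inequality. The only remaining care is the passage from the conditional statements (given the $\varepsilon$'s and the birth times) to the stated averages, and the handling of birth times near $0$, both of which are controlled by the boundedness of $F$ together with the weak continuity of the conditional laws in the birth parameter.
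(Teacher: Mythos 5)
Your proposal is correct and follows essentially the same route as the paper: an $L^2$/Chebyshev argument via propagation of chaos, with first and second moments computed from the Skorohod convergence of single and paired time-rescaled counting processes (time-inhomogeneous Markov chains with transition probability $pm/k$, never jumping simultaneously) to one, respectively two independent, Yule-Simon processes. The only cosmetic difference is that you fix deterministic indices with $j/n\to u$ and integrate by a Riemann-sum argument over the birth time $u$, whereas the paper packages the same computation by sampling the indices $u(n),v(n)$ uniformly from $\{1,\ldots,n\}$, which absorbs the averaging over birth times (and the negligible diagonal and boundary events) into a single convergence-in-law statement.
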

\begin{remark}
The result of Simon \cite{Simon} corresponds to the case when the functional $F$ only depends on the terminal value of the path.  
\end{remark}
We will prove that the stated convergence holds in $L^2(\PP)$ (which, in the present framework, is equivalent to convergence in probability, since  $F$ is bounded), by establishing the so-called propagation of chaos. This requires estimating first and second moments, and 
the calculations rely on the following lemma.

\begin{lemma}\label{L2} For every $n\geq 1$, let $u(n)$ and $v(n)$ denote two independent uniform sample from $[n]\coloneqq \{1,\ldots, n\}$, which are further independent of the sequences $(X_j)_{j\geq 1}$ and $(\varepsilon_i)_{i\geq 2}$. We have as $n\to \infty$:
\begin{enumerate}
\item the conditional distribution of the  time-rescaled counting process $N_{u(n)}(\lfloor \cdot n\rfloor)$
 given $\varepsilon_{u(n)}=0$, 
converges in the sense of Skorohod  towards the law of a Yule-Simon process with parameter $\rho = 1/p$,

\item
the conditional distribution of the pair of time-rescaled counting processes 
$$\left(N_{u(n)}(\lfloor \cdot n\rfloor), N_{v(n)}(\lfloor \cdot n\rfloor)\right)$$
 given $\varepsilon_{u(n)}=\varepsilon_{v(n)}=0$,
converges in the sense of Skorohod  towards the law of a pair of independent Yule-Simon processes with parameter $1/p$.

\end{enumerate}

\end{lemma}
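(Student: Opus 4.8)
The plan is to first read off the exact law of the counting processes inside Simon's model, recognise the Yule-Simon dynamics, and then upgrade to Skorohod convergence by a generator-convergence argument, treating parts (i) and (ii) in parallel.

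First I would condition on $u(n)=j$ together with $\varepsilon_j=0$. Under $\{\varepsilon_j=0\}$ the word $X_j$ is created precisely at step $j$, so $N_j(k)=0$ for $k<j$ and $N_j(j)=1$ (whereas if $\varepsilon_{j'}=1$ the word $X_{j'}$ never occurs at all). The crucial observation is that $(N_j(k))_{k\geq j}$ is a time-inhomogeneous Markov chain not depending on the values $X_i$: given the trajectory up to step $k$ with $N_j(k)=m$, the next letter $\hat X_{k+1}$ equals $X_j$ only if $\varepsilon_{k+1}=1$ (probability $p$) and the uniform sample among $\hat X_1,\dots,\hat X_k$ falls on one of the $m$ copies of $X_j$ (probability $m/k$), so
\[
\PP\bigl(N_j(k+1)=m+1 \mid N_j(k)=m\bigr)=\frac{pm}{k},
\]
and $N_j$ stays put otherwise.

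For part (i), setting $t=k/n$ so that one step is the time increment $1/n$, the birth probability $pm/k$ corresponds to an infinitesimal rate $n\cdot pm/k\to pm/t$, which is exactly the Yule-Simon birth rate $\lambda_m(t)=m/(\rho t)$ of Definition \ref{D1} with $\rho=1/p$. I would make this rigorous through convergence of the generators of the space-time chains $(N_j(\lfloor \cdot n\rfloor),\lfloor \cdot n\rfloor/n)$ to the space-time generator of the Yule-Simon process started from its birth time. Because the rates are singular at $t=0$ and at the birth time, the cleanest way to run this is to pass to the logarithmic clock $s=\ln k$, in which each copy of $X_j$ reproduces at asymptotic rate $p$; the time-changed chain then converges to a standard Yule process run at rate $p$, which is time-homogeneous and for which the discrete-to-continuous approximation is classical, and undoing the time-change recovers precisely the representation of Lemma \ref{L1}. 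Averaging over $j$ finally uses that $u(n)/n$ converges in law to a uniform variable on $[0,1]$, which by Corollary \ref{C1}(i) is exactly the law of the Yule-Simon birth time.

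For part (ii), given $u(n)=j_u\neq v(n)=j_v$ and $\varepsilon_{j_u}=\varepsilon_{j_v}=0$, the pair $(N_{j_u}(k),N_{j_v}(k))_k$ is again a time-inhomogeneous Markov chain, since the single repeated letter $\hat X_{k+1}$ can be a copy of $X_{j_u}$ (probability $pN_{j_u}(k)/k$) or a copy of $X_{j_v}$ (probability $pN_{j_v}(k)/k$) but \emph{never both}; conditioning on the two birth events merely forces new-word creation at the two instants $j_u,j_v$, a negligible perturbation. Hence the joint space-time generator converges to the sum of two independent Yule-Simon generators, with no coupling term, and since $(u(n)/n,v(n)/n)$ converges to a pair of independent uniforms while $\PP(u(n)=v(n))=1/n\to0$, the limit is a pair of independent Yule-Simon processes. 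I expect the real work to lie in tightness in the Skorokhod topology and in quantifying the vanishing interaction: tightness would follow from each $N_j(\lfloor \cdot n\rfloor)$ being non-decreasing with unit jumps together with uniform moment bounds on the counts (the discrete analogues of Corollary \ref{L3}, finite because $\rho=1/p>1$), while the factorisation in (ii) is made rigorous by coupling the true pair with two independent chains and controlling the first discrepancy. Conditionally on both birth times exceeding $\delta n$, the expected number of steps at which a simultaneous increment is attempted is bounded by $\sum_{k\geq \delta n} p^2\,\EE[N_{j_u}(k)N_{j_v}(k)]/k^2=O(1/(\delta n))$, which vanishes, whereas $\PP(U<\delta)=\delta$ can be taken arbitrarily small; this is the main obstacle and the heart of the propagation-of-chaos step.
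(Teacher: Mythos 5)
Your proposal is correct and takes essentially the same route as the paper: both identify the (pair of) counting processes, conditioned on the relevant $\varepsilon$'s vanishing, as an explicit time-inhomogeneous Markov chain with birth probabilities $pa/\ell$ and $pb/\ell$ and no simultaneous jumps, pass to the Skorohod limit by discrete-to-continuous Markov chain approximation, and conclude via the representation of Lemma \ref{L1} together with the convergence of $(u(n)/n, v(n)/n)$ to independent uniforms and the negligibility of the events $u(n)=v(n)$, $u(n)=1$, $v(n)=1$. The only difference is one of implementation: where you run the generator convergence by hand (logarithmic clock reducing to a rate-$p$ Yule process, plus a coupling estimate quantifying the vanishing interaction between the two counts), the paper simply invokes a standard approximation theorem for Feller processes by discrete-time chains (Theorem 19.28 in Kallenberg), which absorbs both the tightness and the asymptotic-independence issues you flag as the main remaining work.
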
 

\begin{proof}  We shall only prove the second assertion, the first being similar and simpler.
Consider two sequences $(j(n))_{n\geq 1}$  and $(k(n))_{n\geq 1}$  such that  $j(n)/n\to u$ and $k(n)/n\to v$, where we first suppose that $0<u<v<1$. We start by fixing $n$ sufficiently large so  $1\leq j(n) <k(n)<n$, 
and work conditionally on $\varepsilon_{j(n)}=\varepsilon_{k(n)}=0$.

The pair of counting processes
${\mathbf N}=(N_{j(n)}(\ell),N_{k(n)}(\ell))_{\ell\geq 0}$ is a time inhomogeneous Markov chain on $\NN^2$ whose transitions can be computed explicitly. Specifically, the path is first deterministic up to time $j(n)$ with
$${\mathbf N}(\ell) =\left\{\begin{matrix} (0,0) &\text{ for }\ell < j(n),\\
(1,0) &\text{ for }\ell = j(n).\end{matrix}\right.
$$
Then for $j(n)\leq \ell < k(n)-1$ and $a$ positive integer,
\begin{eqnarray*}
\PP({\mathbf N}(\ell+1)=(a+1,0) \mid {\mathbf N}(\ell))=(a,0))&= &pa/\ell ,\\
\PP({\mathbf N}(\ell+1))=(a,0) \mid {\mathbf N}(\ell))=(a,0))&= &1-pa/\ell .
\end{eqnarray*}
Since $\varepsilon_{k(n)}=0$, the transition at the $k(n)$-th step is again deterministic with
$${\mathbf N}(k(n))- {\mathbf N}(k(n)-1)=(0,1).$$
Finally, for $\ell\geq k(n)$ and $a,b$ positive integers with $a+b\leq \ell$, one has
\begin{eqnarray*}
\PP({\mathbf N}(\ell+1)=(a+1,b) \mid {\mathbf N}(\ell)=(a,b))&= &pa/\ell ,\\
\PP({\mathbf N}(\ell+1)=(a,b+1) \mid {\mathbf N}(\ell)=(a,b))&= &pb/\ell,\\
\PP({\mathbf N}(\ell+1)=(a,b) \mid {\mathbf N}(\ell)=(a,b))&= &1-p(a+b)/\ell.
\end{eqnarray*}
By a standard result on approximation of Feller processes by discrete time Markov chains (see, e.g. Theorem 19.28 in Kallenberg \cite{Kall}), one readily deduces that  under the conditional law given $\varepsilon_{j(n)}=\varepsilon_{k(n)}=0$, the distribution of the pair of time-rescaled counting processes
$\left( N_{j(n)}(\lfloor \cdot n\rfloor),N_{k(n)}(\lfloor \cdot n\rfloor)\right)$
converges in the sense of Skorohod to that of 
$$\left(\Ind_{\{u\leq t\}} Z(p(\ln t - \ln u)),  \Ind_{\{v\leq t\}} Z'(p(\ln t- \ln v))\right)_{0\leq t \leq 1},$$ where $Z$ and $Z'$  denote two independent Yule process both started from $1$.

That the same holds in the case $0<v<u<1$ is clear by symmetry. 
Since the pair $(u(n)/n, v(n)/n)$ converges in law towards a pair of independent uniform random variables on $[0,1]$ and $\PP(\varepsilon_{u(n)}=0, \varepsilon_{v(n)}=0\mid u(n),v(n))=(1-p)^2$ except when $u(n)=1$, or $v(n)=1$, or $u(n)=v(n)$,
an event whose probability tends to $0$ as $n\to \infty$, our claim now follows readily from the representation of the Yule-Simon process in Lemma \ref{L1}. 
\end{proof}

\begin{proof}[Proof of Proposition \ref{P1}]
Since  $N_j(\lfloor  \cdot n \rfloor)=0$ when $\varepsilon_j=1$ and $F(0)=0$,
we have
 $$ \frac{1}{n}\sum_{j= 1}^{n}F\left( N_j(\lfloor  \cdot n \rfloor)\right) = \frac{1}{n}\sum_{j= 1}^{n} \Ind_{\{\varepsilon_j=0\}}F\left( N_j(\lfloor  \cdot n \rfloor)\right).$$
We deduce from Lemma \ref{L2}(i) that
\begin{eqnarray*}
\lim_{n\to \infty} \EE\left[\frac{1}{n}\sum_{j= 1}^{n}\Ind_{\{\varepsilon_j=0\}} F \left( N_j(\lfloor  \cdot n \rfloor)\right)\right]
&=& \lim_{n\to \infty} \EE\left[\Ind_{\{\varepsilon_{u(n)}=0\}} F \left( N_{u(n)}(\lfloor  \cdot n \rfloor)\right)\right]\\
&=& (1-p) \EE\left[ F(Y)\right] ,
\end{eqnarray*}
Similarly, we deduce from Lemma \ref{L2}(ii) that
\begin{eqnarray*} 
&& \lim_{n\to \infty} \EE\left[\left(\frac{1}{n}\sum_{j= 1}^{n}\Ind_{\{\varepsilon_j=0\}} F \left( N_j(\lfloor  \cdot n \rfloor)\right)\right)^2\right]\\
&=&\lim_{n\to \infty} \EE\left[\frac{1}{n^2}\sum_{j,k= 1}^{n}\Ind_{\{\varepsilon_j=0\}} F \left( N_j(\lfloor  \cdot n \rfloor)\right) \Ind_{\{\varepsilon_k=0\}} F \left( N_k(\lfloor  \cdot n \rfloor)\right)\right]\\
 &=& \lim_{n\to \infty} \EE\left[\Ind_{\{\varepsilon_{u(n)}=0\}} F \left( N_{u(n)}(\lfloor  \cdot n \rfloor)\right)
 \Ind_{\{\varepsilon_{v(n)}=0\}} F \left( N_{v(n)}(\lfloor  \cdot n \rfloor)\right)\right]\\
&=& (1-p)^2 \EE\left[ F(Y)\right]^2,
\end{eqnarray*}
which completes the proof. 
\end{proof}

Our next goal it to extend Proposition \ref{P1} to unbounded  functionals, and this relies on moment bounds for the counting processes $N_j$.
\begin{lemma}\label{L5} For every $\gamma \in(1, 1/p)$, there exist numerical constants $c=c(\gamma,p)>0$
and $\eta=\eta(\gamma,p)\in(0,1)$ such that
$$\EE\left[ N_j(n)^{\gamma}\right] \leq c (n/j)^{\eta}\qquad\text{for all }1\leq j \leq n.$$
\end{lemma}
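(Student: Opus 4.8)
The plan is to work conditionally on the event $\{\varepsilon_j=0\}$: off this event $N_j\equiv 0$, so $\EE[N_j(n)^\gamma]=(1-p)\,\EE[N_j(n)^\gamma\mid \varepsilon_j=0]$ and it suffices to bound the conditional expectation. On $\{\varepsilon_j=0\}$ one has $N_j(\ell)=0$ for $\ell<j$, $N_j(j)=1$, and $(N_j(\ell))_{\ell\ge j}$ is the pure-birth chain whose transitions are computed in the proof of Lemma~\ref{L2}: given $N_j(\ell)=a\ge1$, it moves to $a+1$ with probability $pa/\ell$ and stays at $a$ otherwise. First I would extract the \emph{exact} first-moment recursion $\EE[N_j(\ell+1)\mid N_j(\ell)]=N_j(\ell)(1+p/\ell)$, so that $W_\ell\coloneqq N_j(\ell)/a_\ell$ with $a_\ell\coloneqq \prod_{m=j}^{\ell-1}(1+p/m)$ is a nonnegative martingale started at $W_j=1$. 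Gamma-function asymptotics give $c\,(\ell/j)^{p}\le a_\ell\le C\,(\ell/j)^{p}$ uniformly in $1\le j\le\ell$. Since $\EE[N_j(n)^\gamma]=a_n^\gamma\,\EE[W_n^\gamma]$, the statement reduces to the uniform bound $\sup_n\EE[W_n^\gamma]<\infty$, which yields the claim with $\eta=\gamma p$ (the inequality $\eta<1$ holding precisely because $\gamma<1/p$).

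To control $g(\ell)\coloneqq \EE[N_j(\ell)^\gamma\mid\varepsilon_j=0]$ I would set up a moment recursion. Conditioning on one step,
\[
g(\ell+1)=g(\ell)+\frac{p}{\ell}\,\EE\!\big[\big((N_j(\ell)+1)^\gamma-N_j(\ell)^\gamma\big)N_j(\ell)\,\big|\,\varepsilon_j=0\big].
\]
The idea is to Taylor-expand $(a+1)^\gamma$ about $a$: the leading term $\gamma a^{\gamma-1}$, multiplied by $a$, reproduces the homogeneous growth, while the second-order remainder is of order $a^{\gamma-2}$. Concretely, for $a\ge1$ one has $0\le(a+1)^\gamma-a^\gamma-\gamma a^{\gamma-1}\le C_\gamma a^{\gamma-2}$, whence
\[
g(\ell+1)\le g(\ell)\Big(1+\frac{\gamma p}{\ell}\Big)+\frac{pC_\gamma}{\ell}\,\EE\big[N_j(\ell)^{\gamma-1}\,\big|\,\varepsilon_j=0\big].
\]
I would then feed in an a priori bound on the lower moment of order $\gamma-1$: for $\gamma\in(1,2]$ this is immediate from Jensen and the first-moment estimate, $\EE[N_j(\ell)^{\gamma-1}\mid\varepsilon_j=0]\le a_\ell^{\gamma-1}\le C(\ell/j)^{p(\gamma-1)}$, whereas for $\gamma>2$ it is the conclusion of the lemma for the exponent $\gamma-1<1/p$, available by induction on $\lceil\gamma\rceil$.

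Solving this linear recursion by variation of constants against $A_\ell\coloneqq\prod_{m=j}^{\ell-1}(1+\gamma p/m)\asymp(\ell/j)^{\gamma p}$ gives
\[
\frac{g(n)}{A_n}\le g(j)+\sum_{\ell=j}^{n-1}\frac{pC_\gamma}{\ell}\,\frac{\EE[N_j(\ell)^{\gamma-1}\mid\varepsilon_j=0]}{A_{\ell+1}}.
\]
The crucial point is that the normalized forcing term is of order $\ell^{-1}(\ell/j)^{p(\gamma-1)-\gamma p}=(j/\ell)^{p}/\ell$, so the sum is dominated by $j^{p}\sum_{\ell\ge j}\ell^{-1-p}\le C'$, a bound uniform in both $j$ and $n$. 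Since $g(j)=1$, this gives $g(n)\le C''A_n\le c\,(n/j)^{\gamma p}$, the asserted inequality with $\eta=\gamma p\in(0,1)$.

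The main obstacle is exactly this last uniformity. A naive bound such as $(a+1)^\gamma-a^\gamma\le\gamma 2^{\gamma-1}a^{\gamma-1}$ would produce the exponent $\gamma 2^{\gamma-1}p$, which may exceed $1$ and is therefore useless. Keeping the first-order term $\gamma a^{\gamma-1}$ exact and showing that the second-order correction, once renormalized by the homogeneous growth $A_\ell$, sums to a constant independent of $j$ is what pins down the sharp exponent $\eta=\gamma p<1$. Equivalently, this is the statement that the martingale $W_\ell=N_j(\ell)/a_\ell$ is bounded in $L^\gamma$ uniformly in $j$, the critical threshold being exactly $\gamma=1/p$.
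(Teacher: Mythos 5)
Your proof is correct, but it takes a genuinely different route from the paper's. The paper's argument is much shorter: it first remarks that, by Jensen's inequality, it suffices to prove the bound with $\eta=1$ for some larger exponent $\gamma'\in(\gamma,1/p)$ (this yields the lemma with $\eta=\gamma/\gamma'<1$), and then shows that for $c=c(\gamma',p)$ large enough the shifted, linearly normalized process $k\mapsto (c+N_j(k))^{\gamma'}/k$, $k\geq j$, is a supermartingale. Its one-step drift computation produces the factor $\ell p\bigl((1+1/(c+\ell))^{\gamma'}-1\bigr)-1$, which is negative for all $\ell\geq 1$ once $c$ is large, precisely because $(1+y)^{\gamma'}-1\sim \gamma' y$ as $y\to 0+$ and $p\gamma'<1$; the supermartingale property then gives $\EE[N_j(n)^{\gamma'}]\leq (c+1)^{\gamma'}\,n/j$ in three lines. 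You rely on the same one-step transition $\PP(N_j(\ell+1)=a+1\mid N_j(\ell)=a)=pa/\ell$ and ultimately on the same mechanism ($(1+1/a)^{\gamma}-1\approx \gamma/a$ together with $\gamma p<1$), but where the paper absorbs the second-order error into the additive constant $c$ and sacrifices sharpness by normalizing with $k$ rather than the true growth rate, you normalize by the exact factor $A_\ell\asymp(\ell/j)^{\gamma p}$, control the Taylor remainder through lower moments (Jensen for $\gamma\in(1,2]$, induction on $\lceil\gamma\rceil$ above that), and sum by variation of constants. This costs more work but buys the sharp exponent $\eta=\gamma p$, i.e.\ the uniform-in-$j$ boundedness in $L^\gamma$ of the normalized martingale $N_j(\ell)/a_\ell$, which is strictly stronger than what the paper proves (arbitrary $\eta\in(\gamma p,1)$); the extra precision is not needed downstream, since Corollary~\ref{C4} only requires some $\eta<1$. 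Two remarks for the write-up: make explicit that your induction for $\gamma>2$ carries the \emph{strengthened} statement with exponent $p(\gamma-1)$ at the previous level (your computation does deliver it, and in fact even a non-sharp inductive exponent $\eta'<1$ would still close the loop, giving $g(n)$ of order $(n/j)^{\max(\gamma p,\eta')}$ up to a logarithm, which suffices for the lemma as stated); and note that the ingredients you use without proof --- $g(j)=1$ since $N_j(j)=1$ on $\{\varepsilon_j=0\}$, the two-sided bound $0\leq (a+1)^\gamma-a^\gamma-\gamma a^{\gamma-1}\leq C_\gamma a^{\gamma-2}$ for $a\geq 1$, and the uniform Gamma-ratio estimates for $a_\ell$ and $A_\ell$ --- are all routine and valid.
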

\begin{proof} We first observe from Jensen's inequality, that working with some $\gamma'\in(\gamma, 1/p)$ in place of $\gamma$, it suffices to establish the inequality in the statement with $\eta=1$.
We shall show that one can pick a constant $c>0$ sufficiently large, such that for every $j\geq 1$, the process 
$(c+N_j(k))^{\gamma}/k$ for $k\geq j$ is a supermartingale. As a consequence, we have 
$$\EE\left[ (c+N_j(n))^{\gamma}\right] \leq (c+1)^{\gamma} n/j,$$
which thus proves our claim. 

The conditional probability that $N_j(k+1)= \ell+1$ given $N_j(k)=\ell$ equals $p\ell/k$ for any $j+\ell\leq k+1$, and therefore 
\begin{eqnarray*} 
& &\EE\left[ \frac{(c+N_j(k+1))^{\gamma}}{k+1} -  \frac{(c+N_j(k))^{\gamma}}{k}\mid N_j(k)=\ell \right ]\\
&=&p \frac{\ell}{k} \frac{(c+\ell +1)^{\gamma}}{k+1} +
\left(1-p \frac{\ell}{k}\right) \frac{(c+\ell)^{\gamma}}{k+1}- \frac{(c+\ell)^{\gamma}}{k}\\
&=& \frac{(c+\ell)^{\gamma}}{k(k+1)}\left[ \ell p \left( (1+1/(c+\ell))^{\gamma}-1\right)-1\right].
\end{eqnarray*}
Since $(1+y)^{\gamma}-1\sim \gamma y$ when $y\to 0+$
and $p \gamma <1$, we can now choose $c$ sufficiently large so that 
$$\ell p \left( (1+1/(c+\ell))^{\gamma}-1\right)< 1 \qquad \text{for all } \ell\geq 1;$$
then the conditional expectation above is non-positive, and the supermartingale property is established.

\end{proof}

\begin{corollary}\label{C4} Suppose that there exists $c>0$ and $1\leq \gamma <1/p$ such that $|F(\omega)|\leq c \omega(1)^{\gamma}$ for every counting function $\omega: [0,1]\to \NN$. 
Then the convergence stated in Proposition \ref{P1} holds in $L^1(\PP)$. 
\end{corollary}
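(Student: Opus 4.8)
The plan is to upgrade Proposition \ref{P1} from bounded to polynomially-growing functionals by a truncation-and-uniform-integrability argument, where the uniform integrability is supplied by the moment bound of Lemma \ref{L5}. Since $F$ is complex-valued, I would truncate it smoothly at level $K>0$ by setting $F^{(K)}\coloneqq g_K\circ F$, where $g_K(z)\coloneqq z\min(1,K/\abs{z})$ for $z\neq 0$ and $g_K(0)\coloneqq 0$. The map $g_K$ is continuous on $\mathbb{C}$ and bounded by $K$, so $F^{(K)}$ is again a bounded functional that is continuous for Skorohod's topology and satisfies $F^{(K)}(0)=0$; hence Proposition \ref{P1} applies to it. Moreover $\abs{F(\omega)-F^{(K)}(\omega)}\leq \abs{F(\omega)}\Ind_{\{\abs{F(\omega)}>K\}}\leq c\,\omega(1)^{\gamma}\Ind_{\{c\,\omega(1)^{\gamma}>K\}}$.

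Write $A_n\coloneqq \tfrac1n\sum_{j=1}^n F(N_j(\lfloor \cdot\, n\rfloor))$ and let $A_n^{(K)}$ be the analogous average built from $F^{(K)}$. The triangle inequality bounds $\EE\abs{A_n-(1-p)\EE[F(Y)]}$ by the sum of $\EE\abs{A_n-A_n^{(K)}}$, the term $\EE\bigl|A_n^{(K)}-(1-p)\EE[F^{(K)}(Y)]\bigr|$, and $(1-p)\bigl|\EE[F^{(K)}(Y)]-\EE[F(Y)]\bigr|$. The middle term tends to $0$ as $n\to\infty$ for each fixed $K$ by Proposition \ref{P1}. The last term tends to $0$ as $K\to\infty$ by dominated convergence, since $g_K(z)\to z$ pointwise, $\abs{F(Y)}\leq c\,Y(1)^{\gamma}$, and $\EE[Y(1)^{\gamma}]<\infty$ (by \eqref{E:AsympYS} and Corollary \ref{C1}, as $\gamma<\rho=1/p$). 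The crux is therefore to control the first term \emph{uniformly in} $n$. Using $\omega(1)=N_j(n)$ and the remainder bound above, $\EE\abs{A_n-A_n^{(K)}}\leq \tfrac{c}{n}\sum_{j=1}^n\EE\bigl[N_j(n)^{\gamma}\Ind_{\{N_j(n)>K'\}}\bigr]$ with $K'\coloneqq (K/c)^{1/\gamma}$. Here I would buy a little room in the exponent: fix $\gamma'\in(\gamma,1/p)$ (nonempty since $\gamma<1/p$, and $\gamma'>1$), use the elementary inequality $x^{\gamma}\Ind_{\{x>K'\}}\leq (K')^{-(\gamma'-\gamma)}x^{\gamma'}$, and apply Lemma \ref{L5} with $\gamma'$ to obtain $\tfrac1n\sum_{j=1}^n\EE[N_j(n)^{\gamma'}]\leq \tfrac{c'}{n}\sum_{j=1}^n (n/j)^{\eta'}$ for some $\eta'=\eta(\gamma',p)\in(0,1)$. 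Because $\eta'<1$, the Ces\`aro-type sum $\tfrac1n\sum_{j=1}^n (n/j)^{\eta'}=n^{\eta'-1}\sum_{j=1}^n j^{-\eta'}$ stays bounded as $n\to\infty$ (it converges to $1/(1-\eta')$), so altogether $\sup_n\EE\abs{A_n-A_n^{(K)}}\leq C\,K^{-\delta}$ for constants $C,\delta>0$ independent of $n$.

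Combining the three bounds finishes the proof in the usual order of limits: given $\varepsilon>0$, I first pick $K$ large enough that the first and third terms are below $\varepsilon$ uniformly in $n$, and then let $n\to\infty$ to annihilate the middle term, yielding the stated $L^1(\PP)$-convergence. The only genuinely delicate point is the passage to the slightly larger exponent $\gamma'$: this is exactly what turns the pointwise moment estimate of Lemma \ref{L5} into the uniform integrability of the family $\{N_j(n)^{\gamma}\}$ needed to interchange the limits $K\to\infty$ and $n\to\infty$, while the boundedness of the Ces\`aro average rests on having $\eta'<1$. Everything else is routine.
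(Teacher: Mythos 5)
Your proof is correct and takes essentially the same route as the paper: truncate $F$ at a finite level so that Proposition \ref{P1} applies to the bounded part, and control the truncation error uniformly in $n$ by trading the exponent $\gamma$ for some $\gamma'\in(\gamma,1/p)$ and invoking Lemma \ref{L5}, the boundedness of $\frac1n\sum_{j=1}^n(n/j)^{\eta'}$ for $\eta'<1$ giving exactly the paper's uniform bound \eqref{E:suff}. The only cosmetic difference is your continuous radial truncation $g_K$, which handles complex-valued $F$ directly, where the paper first reduces to $F\geq 0$ and writes $F=(F\wedge b)+(F-b)^+$.
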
 
\begin{proof} Lemma \ref{L5} ensures that for any $\gamma'\in(\gamma,1/p)$, 
\begin{equation}\label{E:suff}
\sup_{n\geq 1}\EE\left[ \frac{1}{n}\sum_{j= 1}^{n}  N_j(n)^{\gamma'} \right]<\infty.
\end{equation}
Our statement then follows from a standard argument of uniform integrability.
More precisely, we may assume without loss of generality that $F\geq 0$, and then set $F_b(\omega)=F(\omega)\wedge b$ for an arbitrary large $b>0$. Next we write 
$$F(\omega)=F_b(\omega) +(F(\omega)-b)^+,$$
and note that since  $F(Y)\leq cY(1)^{\gamma}\in L^1(\PP)$, 
$$\lim_{b\to \infty} \EE[F_b(Y)]=\EE[F(Y)]< \infty.$$
On the other hand, from our assumptions and Markov's inequality, we have 
$$  \EE\left[ \left(F\left( N_j(\lfloor  \cdot n \rfloor)\right)-b\right)^+\right]
\leq c b^{\gamma-\gamma'} \EE[N_j(n)^{\gamma'}],$$
and it follows from \eqref{E:suff} that 
$$\lim_{b\to \infty} \sup_{n\geq 1} \EE\left[ \frac{1}{n}\sum_{j= 1}^{n}\left(F\left( N_j(\lfloor  \cdot n \rfloor)\right)-b\right)^+\right] = 0.$$
Since Proposition \ref{P1} holds for $F_b$, this easily entails our claim. 
\end{proof}

We now have all the ingredients needed for establishing Theorem \ref{T1}

\begin{proof} For every $n\geq 1$, we use the notation with an exponent $(n)$ for quantities related to the skeleton
random walk $S^{(n)}$. In particular, $X^{(n)}_j=\xi(j/n)- \xi((j-1)/n)$ denotes its $j$-th step,   and $N^{(n)}_j$ refers to the  $j$-th counting process arising in the step reinforcement for $S^{(n)}$.

Fix $k\geq 1$, $\theta_1, \ldots, \theta_k\in\RR$ and  $t_1, \ldots, t_k\in[0,1]$, and recall from \eqref{E:SwithN} that 
$$ \theta_j \cdot \hat S^{(n)}(\lfloor nt_j\rfloor) = \sum_{\ell =1}^n N_{\ell}^{(n)}(\lfloor nt_j\rfloor)  \theta_j \cdot X^{(n)}_{\ell} .
$$
Recalling from \eqref{E:LKF} that $\Psi$ denotes the characteristic exponent of $\xi$, we now see by computing first the expectation with respect to the i.i.d. steps that
$$\EE\left[\exp\left\{ \iu \sum_{j=1}^k \theta_j \cdot \hat S^{(n)}(\lfloor nt_j\rfloor)
\right\} \right] = \EE\left[\exp\left\{-\frac{1}{n}\sum_{\ell =1}^n \Psi\left( \sum_{j=1}^k N_{\ell}^{(n)}(\lfloor nt_j\rfloor)  \theta_j \right) \right\} \right].
$$

This yields us to considering the functional
$$F(\omega) =  \Psi\left( \sum_{j=1}^k \omega(\lfloor nt_j\rfloor)  \theta_j\right),$$
where $\omega: [0,1] \to \NN$ stands for a generic counting function. 
Recall from Lemma 3.1 in \cite{BG61} that for every $\eta>0$,  one has as $|z|\to \infty$
$$| \Psi(z)|=\left\{ \begin{matrix} &o(|z|^{2+\eta})& \text{ when }{\mathbf q}\not=0, \\
&o(|z|^{\beta(\Lambda)+\eta})& \text{ when }{\mathbf q}=0\text{ and }\int_{|x|\leq 1}|x|\Lambda (\dd x)=\infty ,\\
&o(|z|^{1+\eta})&\text{ when }{\mathbf q}=0\text{ and }\int_{|x|\leq 1}|x|\Lambda (\dd x)<\infty .\\
\end{matrix} \right.$$  
Further $\Psi(0)=0$, and observing that 
$$\left | \sum_{j=1}^k \omega(t_j)  \theta_j\right | \leq \omega(1) \sum_{j=1}^k |\theta_j|,$$
we see that the assumption that the memory parameter $p$ is admissible ensures that the requirement of Corollary \ref{C4} is fulfilled with any $\gamma \in(1, 1/p)$. 
We conclude that
$$\lim_{n\to \infty} \EE\left[ \exp\left\{ \iu \sum_{j=1}^k \theta_j \cdot \hat S^{(n)}(\lfloor nt_j\rfloor)
\right\} \right] = \exp\left\{ -(1-p) \EE
\left[ {\Psi}\left(\sum_{j=1}^k Y(t_j)  \theta_j\right)\right]\right\}.$$ 
Comparing with Corollary \ref{C3}, this completes the proof.
\end{proof} 

We now conclude this work by briefly discussing the situation when the memory parameter is not admissible for $({\mathbf q}, a, \Lambda)$.  In this direction, recall that Blumenthal and Getoor \cite{BG61} also introduced a so-called {\em lower index} of a L\'evy process $\xi$ with characteristic exponent $\Psi$ as
$$\beta''=\sup\{\alpha\geq 0: \lim_{|\theta|\to \infty} |\theta|^{-\alpha} \Re\Psi(\theta)=\infty \}.$$
There is always the inequality  $\beta''\leq \beta$, and equality may hold (for instance in the stable case).
Now if we assume that $p>1/\beta''$, then we have $\EE[\Re\Psi(Y(1))]=\infty$, and 
 it follows from Proposition \ref{P1} and a classical truncation argument that for every $\theta\in\RR\backslash \{0\}$
$$\lim_{n\to \infty} \frac{1}{n}\sum_{j= 1}^{n} \Re\Psi\left( N_j(n) \theta  \right)= \infty,\qquad \text{in probability}.$$
The calculation in the proof of Theorem \ref{T1} now shows that 
$$\lim_{n\to \infty}\EE\left[\exp\left\{ \iu \theta \cdot \hat S^{(n)}(n)
\right\} \right] = 0,
$$
and hence the sequence of variables $\hat S^{(n)}(n)$ cannot converge weakly. 

When $\xi$ is an isotropic $\alpha$-stable L\'evy process for some $\alpha\in(0,2]$, it follows from Theorem 1 of Businger \cite{Buart}  that the process $n^{1/\alpha -p} \hat S^{(n)}(\lfloor tn\rfloor )$ converges in distribution to 
$t^p \eta$ as $n\to \infty$, where $\eta$ is some non-degenerate random vector in $\RR$. 
A perusal of her arguments shows that this feature remains provided that $\Psi(c\theta)\sim c^{\alpha}\Psi_{\alpha} (\theta)$ as $c\to \infty$, where $\Psi_{\alpha}$ is the characteristic exponent of an $\alpha$-stable L\'evy process. Nonetheless, it is also seen from her approach that one cannot expect a similar result to hold when $\xi$ is an arbitrary  L\'evy process, even if we require its upper and lower Blumenthal-Getoor indices $\beta$ and $\beta''$ to coincide with $\alpha$.

\bibliography{NoiseR.bib}

\end{document}